\DeclareSymbolFont{legacymaths}{OT1}{cmr}{m}{n}
\newcommand{\cL}{\mathcal{L}}
\newcommand {\PP}{\mathbb{P}}
\newtheorem{thm}{Theorem}[section]
\newtheorem{lem}[thm]{Lemma}
\newtheorem{cor}[thm]{Corollary}
\newtheorem{pro}[thm]{Proposition}
\theoremstyle{definition}
\newtheorem{defn-rem}[thm]{Definition and Remark}
\newtheorem{conj}[thm]{Conjecture}
\newtheorem{rem}[thm]{Remark}
\numberwithin{equation}{section}
\begin{document}

\title{On the strong Lefschetz question \\ for uniform powers of general linear forms in $k[x,y,z]$}

\author{Juan Migliore}
\address{Department of Mathematics,
 University of Notre Dame,
  Notre Dame,
IN 46556 \\
 USA}
 \email{migliore.1@nd.edu}

\author{Rosa Mar\'ia Mir\'o-Roig}
\address{Departament de Matemàtiques i Informàtica,
Universitat de Barcelona,
08007 Barcelona,
Spain}
\email{miro@ub.edu}

\subjclass[2010]{}

\begin{abstract}

Schenck and Seceleanu proved that if $R = k[x,y,z]$, where $k$ is an infinite field, and $I$ is an ideal generated by any collection of powers of  linear forms, then multiplication by a general linear form $L$ induces a homomorphism of maximal rank from any component of $R/I$ to the next. That is, $R/I$ has the {\em weak Lefschetz property}. Considering the more general {\em strong Lefschetz question} of when $\times L^j$ has maximal rank for $j \geq 2$, we give the first systematic study of this problem. We assume that the linear forms are general and that the powers are all the same, i.e. that $I$ is generated by {\em uniform} powers of general linear forms. We prove that for any number of such generators, $\times L^2$ always has maximal rank. We then specialize to almost complete intersections, i.e. to four generators, and we show that for $j = 3,4,5$ the behavior depends on the uniform exponent and on $j$, in a way that we make precise. In particular, there is always at most one degree where $\times L^j$ fails maximal rank. Finally, we note that experimentally all higher powers of $L$ fail maximal rank in at least two degrees.
\end{abstract}
\maketitle
%\tableofcontents

\section{Introduction}

Ideals of powers of linear forms have been studied rather extensively. We can point, for example, to \cite{CHMN}, \cite{DIV}, \cite{EI}, \cite{hss}, \cite{MMN-lin forms} and \cite{SS}. We take the latter as our launching point, and we consider only ideals in $R = k[x,y,z]$, where $k$ is an infinite field.

If $R/I$ is a standard graded artinian algebra and $L$ is a general linear form, we recall that $R/I$ is said to have the {\em weak Lefschetz property (WLP)}  if the multiplication $\times L : [R/I]_{\delta-1} \rightarrow [R/I]_{\delta}$ has maximal rank for all $\delta$. The {\em strong Lefschetz property (SLP)} says that for all $j \geq 1$ the multiplication by $L^j$ has maximal rank in all degrees. We will call the {\em strong Lefschetz question} the analysis of  which $j$ and which $\delta$ provide the homomorphism $\times L^j : [R/I]_{\delta-j} \rightarrow [R/I]_\delta$ having maximal rank.

We consider ideals of the form $I = (L_1^{a_1},\dots,L_r^{a_r})$ in $R = k[x,y,z]$, where $k$ is an infinite field. A theorem of Stanley \cite{stanley} and Watanabe \cite{watanabe} shows that when $r=3$, $R/I$ has the SLP, so maximal rank always holds. Thus the question is only of interest for $r\geq 4$.

The main theorem of \cite{SS} asserts that  if $I$ is {\em any} ideal of the stated form then $R/I$ has the WLP (see also \cite{MMN-lin forms} for a different proof). This leads naturally to the question of what happens for higher powers of a general linear form.
For $\times L^2$ it was shown in \cite{MMN-lin forms} that for $r = 4$, if the linear forms are chosen generally then $\times L^2 : [R/I]_j \rightarrow [R/I]_{j+2}$ has maximal rank for all $j$.
On the other hand, it was shown in \cite{CHMN} and \cite{DIV} that if the linear forms are not required to be general then $\times L^2$ does not necessarily have maximal rank, and indeed the question of maximal rank is a quite subtle one depending on the geometry of the set of points dual to the linear forms. Thus we focus on general linear forms.

So what, exactly, should we expect for $\times L^j$ for $L$ a general linear form and $j \geq 2$?
In this paper we want to begin the study of the multiplication by higher powers, $L^j$, of the general linear form by assuming that the exponents of the linear forms generating our ideal are all the same, i.e. that we have {\em uniform} powers. Our first main result, Theorem \ref{L^2}, is that for arbitrary $r$, $\times L^2 : [R/I]_{\delta-2} \rightarrow [R/I]_\delta$ has maximal rank for all $\delta$. We conjecture that in fact the result also holds for mixed powers.

For $j \geq 3$ we already get interesting behavior for $r=4$, i.e. by assuming that the ideal is an almost complete intersection of uniform powers of general linear forms: $I = (L_1^k,\dots,L_4^k)$. We want to see if $\times L^j$ always has maximal rank, and if not, to see how often we can expect this phenomenon to occur. We find that  it is rarely the case that $\times L^j$ has maximal rank in all degrees, in fact, but it occasionally does. In this paper we classify those values of $j$ and $k$ for which it does have maximal rank in all degrees, and those values of $j$ and $k$ for which it fails maximal rank in only one degree.

More precisely, in Theorem \ref{L3}, Theorem \ref{L4} and Theorem \ref{L5} we show that $\times L^3$ and $\times L^4$ sometimes have maximal rank in all degrees, depending on the congruence class of $k$ modulo 3, and that $\times L^5$ never has maximal rank in all degrees. However, we also show that for $j = 3,4,5$, whenever this multiplication fails maximal rank, it does so only in one spot. We note in Remark \ref{geq 6} that for higher powers of $L$, the multiplication fails maximal rank in more than one spot.
These results show more clearly that Anick's theorem does not extend from general forms to powers of general linear forms, although this was already known. (Indeed, if $I=(x^3,y^3,z^3,L_1^3)\subset R$, where $L_1$ is a general linear form, then for a general linear form $L$,  $\times L^3$ fails to have maximal rank (see Proposition \ref{socledegree}), while Anick's result shows that if instead we take general forms of degree 3 then maximal rank does hold.)

%%%%%%%%%%%%%%%%%%%%%%%%%%%%%%%%%%%%%%%%%%%%%%%%%%%

\section{Preliminaries}

Throughout this paper we consider the homogeneous polynomial ring $R = k[x,y,z]$, where $k$ is an infinite field. In this section we recall the main tools that we will use in the rest of the paper.

For any artinian ideal $I \subset R$ and a general linear form $L \in R$, the exact sequence
\[
\cdots \rightarrow [R/I]_{m-j} \stackrel{\times L^j}{\longrightarrow} [R/I]_m \rightarrow [R/(I,L^j)]_m \rightarrow 0
\]
gives, in particular, that the multiplication by $L^j$  will fail to have maximal rank exactly when
\begin{equation}
  \label{eq:max-rank-I}
\dim_k [R/(I,L^j)]_m \neq \max\{ \dim_k [R/I]_m - \dim_k [R/I]_{m-j} , 0 \};
\end{equation}
in that case, we will say that  $R/I$  fails maximal rank in degree $m$.

We will deeply need the following result of Emsalem and Iarrobino, which gives a duality between powers of linear forms and ideals of fat points in $\mathbb P^{n-1}$.  We only quote Theorem I in \cite{EI} in the form that we need.

\begin{thm}[\cite{EI}]   \label{thm:inverse-system}
Let $\langle L_1^{a_1} ,\dots,L_n^{a_n} \rangle \subset R$ be an ideal generated by powers of $n$ general linear forms.  Let $\wp_1, \dots, \wp_n$ be the ideals of $n$ general points in $\mathbb P^{2}$.  (Each point is actually obtained explicitly from the corresponding linear form by duality.)  Choose positive integers $a_1,\dots,a_n$.  Then for any integer $j \geq \max\{ a_i  \}$,
\[
\dim_k \left [R/ \langle L_1^{a_1}, \dots, L_n^{a_n} \rangle  \right ]_j =
\dim_k \left [ \wp_1^{j-a_1 +1} \cap \dots \cap \wp_n^{j-a_n+1} \right ]_j .
\]
\end{thm}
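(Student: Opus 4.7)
The plan is to use Macaulay's inverse system (apolarity) to convert the statement about powers of linear forms into a statement about fat points in $\mathbb{P}^2$. Let $S = k[X,Y,Z]$, viewed as a graded $R$-module via the \emph{contraction} action in which $x,y,z$ act as the partial ``contraction'' operators dual to multiplication by $X,Y,Z$. The graded pairing $R_j \times S_j \to k$ is perfect, so it identifies the $k$-dual of $[R/I]_j$ with the annihilator $I^{\perp}_j := \{F \in S_j \mid g \cdot F = 0 \text{ for all } g \in I_j\}$. Since $I = \sum_i (L_i^{a_i})$, one formally has $I^{\perp} = \bigcap_i (L_i^{a_i})^{\perp}$, and because $[J \cap J']_j = J_j \cap J'_j$ it suffices to identify $(L_i^{a_i})^{\perp}_j$ explicitly.

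The key lemma to prove is: for a linear form $L \in R_1$ corresponding to a point $P \in \mathbb{P}^2$ under the duality between $R_1$ and $(R_1)^*$, and any integer $j \geq a$, one has
\[
(L^a)^{\perp}_j = [\wp_P^{\,j-a+1}]_j,
\]
where $\wp_P \subset S$ is the homogeneous ideal of $P$. I would establish this by a change of coordinates: pick linear forms so that $L = x$ and $P = [1:0:0]$, so that $\wp_P = (Y,Z)$. The contraction of $x^a$ against a monomial $X^{i_0}Y^{i_1}Z^{i_2}$ is zero precisely when $i_0 < a$. Thus the forms $F \in (L^a)^{\perp}_j$ are the linear combinations of monomials in $S_j$ with $X$-degree $\leq a-1$, equivalently with combined $(Y,Z)$-degree $\geq j-a+1$. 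Dehomogenizing by $X$ shows these are exactly the degree-$j$ forms vanishing at $P$ to order at least $j-a+1$.

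Combining the lemma with the formal properties of $(-)^{\perp}$ gives
\[
[R/I]_j \;\cong\; \bigcap_{i=1}^n (L_i^{a_i})^{\perp}_j \;=\; \bigcap_{i=1}^n [\wp_i^{\,j-a_i+1}]_j \;=\; \bigl[\wp_1^{\,j-a_1+1} \cap \cdots \cap \wp_n^{\,j-a_n+1}\bigr]_j,
\]
which yields the claimed equality of dimensions. The hypothesis $j \geq \max\{a_i\}$ is exactly what guarantees each exponent $j - a_i + 1$ to be positive, so that the fat point ideals on the right are non-trivial. I expect the main technical point to be the local computation of $(L^a)^{\perp}$ via change of coordinates; once that lemma is in place, the rest is a formal manipulation with the apolarity pairing.
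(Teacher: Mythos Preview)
The paper does not prove this statement: it is quoted (in the special form needed) from Emsalem and Iarrobino \cite{EI}, and no argument is supplied beyond the citation. So there is no ``paper's own proof'' to compare against.

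That said, your proposal is correct and is essentially the standard apolarity argument underlying the Emsalem--Iarrobino result. The perfect pairing $R_j\times S_j\to k$ identifies $\dim_k[R/I]_j$ with $\dim_k I^{\perp}_j$; the orthogonal of a sum of ideals is the intersection of the orthogonals; and your change-of-coordinates computation correctly shows that, for $L=x$ and $P=[1:0:0]$, the orthogonal of $(x^a)_j$ is spanned by the monomials $X^{j_0}Y^{j_1}Z^{j_2}$ with $j_0\le a-1$, i.e.\ $j_1+j_2\ge j-a+1$, which is exactly $[(Y,Z)^{\,j-a+1}]_j=[\wp_P^{\,j-a+1}]_j$. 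The hypothesis $j\ge\max\{a_i\}$ is used precisely where you say, to ensure all exponents $j-a_i+1$ are positive. One small wording point: your displayed ``$[R/I]_j\cong\bigcap_i(L_i^{a_i})^{\perp}_j$'' is really an equality of \emph{dimensions} (or an isomorphism via the pairing), not a literal equality of subspaces of the same ambient space; you might make that explicit. Otherwise the argument is complete.
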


From now on, we will denote by
$${\mathcal L}_{2}(j; b_1, b_2,\cdots ,b_n)$$
the linear system  $[ \wp_1^{b_1} \cap \dots \cap \wp_n^{b_n} ]_j\subset [R]_j $. Note that we view it as a vector space, not a projective space, when we compute dimensions.
If necessary, in order to simplify notation, we use superscripts to indicate repeated entries. For example,
$\cL_2(j; 5^2, 2^3) = \cL_2 (j; 5, 5, 2, 2, 2)$.

Notice that, for every linear system $\cL_2 (j;ba_1,\ldots,b_n)$, one has
\[
\dim_k  \cL_2 (j; b_1,\ldots,b_n) \ge \max \left \{0, \binom{j+2}{2} - \sum_{i=1}^n \binom{b_i +1}{2} \right \},
\]
where the right-hand side is called the {\em expected dimension} of the linear system. If the inequality is strict, then the linear system $\cL_2 (j; b_1,\ldots,b_n)$ is called {\em special}. It is a difficult problem to classify the special linear systems.

Using Cremona transformations, one can relate two different linear systems (see \cite{Nagata}, \cite{LU}, or \cite{Dumnicky}, Theorem 3), which we state only in the form we will need even though the cited results are more general.

\begin{lem}
  \label{lem:Cremona}
Let $n >   2$ and let $j, b_1,\ldots,b_n$ be non-negative integers, with $b_1 \geq \dots \geq b_n$.  Set $m =  j - (b_1 + b_2 + b_{3})$. If $b_i + m \ge 0$ for all $i = 1,2,3$, then
\[
\dim_k \cL_2 (j; b_1,\ldots,b_n) = \dim_k \cL_2 (j + m; b_1 +m,b_2+m,b_{3} +m , b_{4},\ldots,b_n).
\]
\end{lem}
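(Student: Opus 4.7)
The plan is to realize the claimed equality via the standard quadratic Cremona transformation $\sigma: \PP^2 \dashrightarrow \PP^2$ given in coordinates by $(x:y:z) \mapsto (yz : xz : xy)$. Because the points $\wp_1,\dots,\wp_n$ are general and $b_1 \geq \cdots \geq b_n$, after a linear change of coordinates I may assume that $\wp_1 = (1:0:0)$, $\wp_2 = (0:1:0)$, $\wp_3 = (0:0:1)$: these are exactly the fundamental points of $\sigma$. The goal is then to exhibit a $k$-linear isomorphism
\[
\cL_2(j; b_1,\ldots,b_n) \;\xrightarrow{\sim}\; \cL_2(j+m;\, b_1+m, b_2+m, b_3+m,\, b_4,\ldots,b_n),
\]
where the last $n-3$ base points of the target are the images $\sigma(\wp_4),\ldots,\sigma(\wp_n)$, which remain a general configuration.

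The central computation is direct. A form $F$ of degree $j$ lies in $\cL_2(j; b_1,\ldots,b_n)$ precisely when every monomial $x^a y^b z^c$ appearing in $F$ (with $a+b+c=j$) satisfies $b+c \geq b_1$, $a+c \geq b_2$, $a+b \geq b_3$ (the multiplicities at the three coordinate points), together with the vanishing conditions at $\wp_i$ for $i \geq 4$. Substituting yields
\[
\sigma^*F \;=\; F(yz,\,xz,\,xy) \;=\; \sum c_{a,b,c}\, x^{b+c}\, y^{a+c}\, z^{a+b},
\]
and by the multiplicity conditions every term is divisible by $x^{b_1} y^{b_2} z^{b_3}$. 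Hence $\sigma^*F = x^{b_1} y^{b_2} z^{b_3}\, G$ with $G$ homogeneous of degree $2j - (b_1+b_2+b_3) = j+m$.

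Next I check $G$ has the prescribed multiplicities in the target system. The image inside $G$ of the monomial $x^a y^b z^c$ is $x^{b+c-b_1} y^{a+c-b_2} z^{a+b-b_3}$, whose $(y,z)$-degree equals $j+a-b_2-b_3$. Since $a \geq 0$, the minimum is $j-b_2-b_3 = b_1+m$, so $G$ vanishes to order at least $b_1+m$ at $(1:0:0)$; the other two coordinate points are handled symmetrically. Since $\sigma$ is a biregular isomorphism away from the three coordinate lines, pull-back preserves the order of vanishing at each $\sigma(\wp_i)$ for $i \geq 4$, so $G \in \cL_2(j+m;\, b_1+m, b_2+m, b_3+m,\, b_4,\ldots,b_n)$. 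The assignment $F \mapsto G$ is $k$-linear, and because $\sigma$ is (up to coordinates) an involution, running the same procedure on $G$ recovers $F$, yielding the isomorphism and in particular the equality of dimensions.

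The hypothesis $b_i + m \geq 0$ for $i=1,2,3$ is precisely what ensures the exponents $b+c-b_1$, $a+c-b_2$, $a+b-b_3$ appearing in $G$ are non-negative, so that $G$ is a genuine polynomial and the target multiplicities make sense. The main obstacle is simply keeping careful bookkeeping through the monomial transformation: one must verify at each of the three coordinate points that the exchange $b_i \leftrightarrow b_i+m$ together with $j \leftrightarrow j+m$ is compatible with the way $\sigma$ pulls back vanishing orders, and one must confirm that the image points $\sigma(\wp_i)$ for $i \geq 4$ retain general position so that the dimension of the resulting system really is the one recorded on the right-hand side.
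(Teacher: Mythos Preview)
The paper does not actually prove this lemma: it is quoted from the literature with citations to Nagata, Laface--Ugaglia, and Dumnicki, and no argument is given. Your proposal supplies a direct, self-contained proof via the standard quadratic Cremona map, which is exactly the classical argument underlying those references. The construction of the linear isomorphism $F \mapsto G$ is carried out correctly, including the check that $\sigma$ is an involution and that multiplicities at the non-fundamental points $\wp_4,\ldots,\wp_n$ are preserved.

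One small correction: your explanation of the hypothesis $b_i + m \ge 0$ is not quite right. The exponents $b+c-b_1$, $a+c-b_2$, $a+b-b_3$ in $G$ are already non-negative by the multiplicity conditions $b+c \ge b_1$, $a+c \ge b_2$, $a+b \ge b_3$ imposed on $F$; the hypothesis plays no role there. Where the hypothesis is genuinely needed is to ensure that the target multiplicities $b_i + m$ are non-negative integers, so that the right-hand linear system is well-posed and the inverse construction (applying Cremona to $G$ and dividing out $x^{b_1+m}y^{b_2+m}z^{b_3+m}$) makes sense and recovers $F$. Without it the forward map is still defined but need not be surjective onto the system with the stated parameters. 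This is a minor point and does not affect the validity of your argument.
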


The analogous linear systems have also been studied for points in $\mathbb P^r$. Following \cite{DL}, the linear system $\cL_r (j; b_1,\ldots,b_n)$ is said to be in {\em standard form} if \[
(r-1)  j \ge b_1 + \dots  + b_{r+1} \quad \text{and} \quad b_1 \ge \cdots \ge b_n \ge 0.
\]
In particular, for $r=2$,  they show that every linear system in standard form is non-special.  (This is no longer true if $r \ge 3$. For example,  $\cL_3 (6; 3^9)$ is in standard form and special.)

Notice again that we always use the vector space dimension of the linear system rather than the dimension of its projectivization. Furthermore, we always use the convention that a binomial coefficient $\binom{a}{r}$ is zero if $a < r$.

\begin{rem} \label{bezout}
B\'ezout's theorem also provides a useful simplification. Again, we only state the result we need in this paper. Assume the points $P_1,\dots,P_n$ are general.  If $2j < b_1 + \dots +b_5$ then
\[
\dim \cL_2(j;b_1,\dots,b_n) = \dim \cL_2 (j-2; b_1-1,\dots,b_5-1,b_6,\dots, b_n).
\]
If $j < b_1 + b_2$ then
\[
\dim \cL_2(j;b_1,\dots,b_n) = \dim \cL_2 (j-1; b_1-1,b_2-1,b_3,\dots, b_n).
\]
\end{rem}

\begin{lem} \label{res of ci}
Let $P_1,\dots,P_4$ be general points in $\mathbb P^2$ with homogeneous ideals $\wp_1,\dots, \wp_4$ respectively, and let $X = \{ P_1 ,\dots, P_4 \}$. Let $m \geq 1$ be an  integer. Then $I_X^m$ is a saturated ideal, and  the minimal free resolution of $I_X^m$ has the form
\[
0 \rightarrow R(-2m-2)^m   \rightarrow R(-2m)^{m+1} \rightarrow I_X^m \rightarrow 0.
\]
In particular, $I_X^m = \wp_1^m \cap \dots \cap \wp_4^m = I_X^{(m)}$.
\end{lem}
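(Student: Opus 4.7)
The plan is to use that four general points in $\PP^2$ cut out a complete intersection of two conics, from which both the stated resolution and the saturation follow from standard properties of powers of a regular sequence of length $2$. Since $P_1,\ldots,P_4$ are general, they impose independent conditions on conics and lie on a pencil of conics whose two general members $f,g$ meet transversely in exactly the four points of $X$; hence $I_X = (f,g)$ is a complete intersection generated in degree $2$, with Koszul resolution
\[
0 \to R(-4) \to R(-2)^2 \to I_X \to 0.
\]

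Next, for each $m \ge 1$ I would verify the claimed resolution of $I_X^m$ directly. The ideal $I_X^m$ is generated by the $m+1$ forms $f^{m-i}g^i$, $i = 0,\ldots,m$, each of degree $2m$, and the obvious relations $g\cdot f^{m-i}g^i - f\cdot f^{m-i-1}g^{i+1} = 0$ for $i = 0,\ldots,m-1$ give $m$ syzygies in degree $2m+2$, producing the complex
\[
0 \to R(-2m-2)^m \xrightarrow{\varphi} R(-2m)^{m+1} \to I_X^m \to 0,
\]
where $\varphi(e_i) = g\,e_i - f\,e_{i+1}$. A direct calculation (done explicitly in the case $m=2$ and by the same bidiagonal pattern in general) shows that the signed $m\times m$ minors of the matrix of $\varphi$ recover exactly the generators $f^{m-i}g^i$ of $I_X^m$; since $I_X^m$ has grade $2$ and the entries of $\varphi$ lie in $\mathfrak m$, the Hilbert--Burch theorem yields exactness and minimality. (Alternatively, one may argue by induction on $m$ using the short exact sequence $0 \to R/I_X^{m-1}(-2) \xrightarrow{\cdot f} R/I_X^m \to R/(I_X^m,f) \to 0$.)

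From the resolution, $\mathrm{pd}_R(R/I_X^m) = 2$, so the Auslander--Buchsbaum formula gives $\mathrm{depth}(R/I_X^m) = 3-2 = 1 = \dim R/I_X^m$; hence $R/I_X^m$ is Cohen--Macaulay, the irrelevant ideal is not associated to it, and $I_X^m$ is saturated. For the last assertion, localizing at each $\wp_i$ the regular sequence $f,g$ still generates $\wp_i R_{\wp_i}$ (both ideals have codimension $2$ there), so $I_X^m R_{\wp_i} = \wp_i^m R_{\wp_i}$. Since $I_X^m$ has no embedded primes and each $\wp_i^m$ is $\wp_i$-primary, the primary decomposition of $I_X^m$ is exactly $\wp_1^m \cap \cdots \cap \wp_4^m$, which by definition is $I_X^{(m)}$. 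The main obstacle is the exactness and minimality of the proposed complex, but once $\varphi$ is recognised as the second map in a Hilbert--Burch resolution this is immediate; everything else is bookkeeping with standard homological facts.
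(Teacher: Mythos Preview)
Your argument is correct and follows exactly the line the paper takes: the paper's proof merely notes that $X$ is a reduced complete intersection of two conics and defers to \cite{HU}, Theorem~2.8 and \cite{powers}, Corollary~2.10, while you have spelled out precisely the content of those references (Hilbert--Burch for the bidiagonal syzygy matrix, Auslander--Buchsbaum for Cohen--Macaulayness, and localization for $I_X^m=I_X^{(m)}$). One small tightening: the parenthetical ``both ideals have codimension $2$ there'' does not by itself give $I_X R_{\wp_i}=\wp_i R_{\wp_i}$ (e.g.\ $(x^2,y)$ in $k[x,y]_{(x,y)}$); simply localize the primary decomposition $I_X=\wp_1\cap\cdots\cap\wp_4$, or invoke that $I_X$ is radical, and the equality is immediate.
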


\begin{proof}
This is well known, since $X$ is the reduced complete intersection of two conics. See for instance \cite{HU}, Theorem 2.8 or \cite{powers}, Corollary 2.10.
\end{proof}

%%%%%%%%%%%%%%%%%%%%%%%%%%%%%%%%%%%%%%%%%%%%%%%%%%%

\section{Preparation} \label{preparation section}

From now on we will consider quotients of the form $R/I$, where $R = k[x,y,z]$, $I = (L_1^k,\dots,L_r^k)$ and $L_1,\dots,L_r$ are general linear forms. Specifically, we are interested in whether
\[
\times L^j : [R/I]_{\delta-j} \rightarrow [R/I]_\delta
\]
has maximal rank for all $\delta$, for $j = 2, 3,4$ and 5, where $L$ is a general linear form. Since the case $k=2$ is trivial, we assume $k \geq 3$. In section \ref{section L2} we work with arbitrary $r$, but in section \ref{section higher j} we restrict to $r=4$. In this section we  give technical preparatory results that will be central to our proofs in section \ref{section higher j}. Thus from now on in this section we assume $r=4$ (and we return to arbitrary $r$ in section \ref{section L2}). However, the general approach used in section \ref{section L2} will also be reflected in our preparation in this section.

We first compute the socle degree (i.e. the last non-zero component) of $R/I$. Since $L_1,L_2,L_3$ are general, without loss of generality we can assume that $L_1 = x, L_2 = y, L_3 = z$. Then by a well-known result of Stanley \cite{stanley} and Watanabe \cite{watanabe}, $\times L_4^k$ has maximal rank in all degrees. The socle degree of $R/I$ is the last degree where $\times L_4^k$ is not surjective. Since $R/(L_1^k,L_2^k,L_3^k)$ has socle degree $3k-3$, one checks that the socle degree of $R/I$ is $2k-2$. (This also follows from \cite{MM}, Lemma 2.5.)

More precisely, we make the following Hilbert function calculation, also using the fact that the Hilbert function of $R/(x^k,y^k,z^k)$ is symmetric, and that of $R/I$ ends in degree $2k-2$.
{\small
\[
\begin{array}{c|cccccccccccccccc}
\hbox{degree} & 0 & 1 & 2 & \dots & k-2 &  k-1 & k & k+1 & \dots & 2k-4 & 2k-3 & 2k-2 \\ \hline
R/(x^k,y^k,z^k) & 1 & 3 & 6 & \dots & \binom{k}{2} & \binom{k+1}{2} & \binom{k+2}{2}-3 & \binom{k+3}{2}-9 & \dots & \binom{k+3}{2}-9 & \binom{k+2}{2}-3 & \binom{k+1}{2} \\
R/I & 1 & 3 & 6 & \dots & \binom{k}{2} & \binom{k+1}{2} & \binom{k+2}{2}-4 & \binom{k+3}{2} - 12 & \dots & 5k-9 & 3k-3 & k
\end{array}
\] }

In \cite{MMN-monomials} Proposition 2.1, it was observed that for any standard graded algebra $R/I$, if $\times L : [R/I]_{\delta-1} \rightarrow [R/I]_\delta$ is surjective then so is $\times L : [R/I]_{\delta+i} \rightarrow [R/I]_{\delta+i+1}$ for all $i \geq 0$. The same clearly holds for $\times L^j$ (after adjusting the indices).
Furthermore, {\em if $R/I$ is level} and $\times L : [R/I]_{\delta-1} \rightarrow [R/I]_\delta$ is injective then so is $\times L : [R/I]_{\delta-i} \rightarrow [R/I]_{\delta-i+1}$ for all $i \geq 2$. In our present situation, we conjecture that $R/I$ is always level:

\begin{conj}
{\em If $R = k[x,y,z]$ and $I = (L_1^k,\dots,L_4^k)$ with $L_1,\dots,L_4$ general, then $R/I$ is level with Cohen-Macaulay type $k$}.
\end{conj}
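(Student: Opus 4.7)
The plan is to use Gorenstein liaison to reduce the problem to analyzing the minimal free resolution of a compressed Gorenstein algebra. Set $J = (L_1^k, L_2^k, L_3^k)$, a complete intersection of three degree-$k$ forms, so $R/J$ is Gorenstein of socle degree $3k-3$. Then $I = J + (L_4^k)$ is directly linked via $J$ to $I' := (J : L_4^k)$, and $R/I'$ is Gorenstein. The standard liaison formula $h_{R/I'}(i) = h_{R/J}(i) - h_{R/I}(3k-3-i)$, together with the Hilbert function of $R/I$ computed above, yields
\[
h_{R/I'}(i) = \min\left\{\binom{i+2}{2}, \binom{2k-1-i}{2}\right\},
\]
so $R/I'$ is the \emph{compressed} Gorenstein algebra of socle degree $2k-3$, with $\dim[I']_{k-1}=k$.

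Next I would analyze the minimal free resolution of $R/I'$ using the Buchsbaum--Eisenbud structure theorem for codimension-$3$ Gorenstein ideals. When $k$ is odd, the $k$ generators in degree $k-1$ form a complete minimal generating set and the Pfaffian resolution is
\[
0 \to R(-2k) \to R(-k-1)^k \to R(-k+1)^k \to R \to R/I' \to 0.
\]
When $k$ is even, BE parity forces an odd total number of generators; matching the compressed Hilbert function against the admissible degree patterns of the skew Pfaffian matrix, the generic resolution contains one extra (generator, first syzygy) pair in degree $k$:
\[
0 \to R(-2k) \to R(-k-1)^k \oplus R(-k) \to R(-k) \oplus R(-k+1)^k \to R \to R/I' \to 0.
\]

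The minimal resolution of $R/I$ is then obtained by the mapping cone of the comparison map from the shifted resolution of $R/I'(-k)$ into the Koszul resolution $0 \to R(-3k) \to R(-2k)^3 \to R(-k)^3 \to R$ of $R/J$, induced by multiplication by $L_4^k$. After performing the trivial cancellations -- the top $R(-3k)$ summand, and, in the even-$k$ case, the $R(-2k)$ from the extra BE syzygy against one of the three Koszul $R(-2k)$ summands via the scalar block of the comparison map at that spot -- one obtains, in both parities,
\[
0 \to R(-2k-1)^k \to R(-2k)^3 \oplus R(-2k+1)^k \to R(-k)^4 \to R \to R/I \to 0.
\]
The last free module $R(-2k-1)^k$ lies in a single degree, so $R/I$ is level with socle concentrated in degree $(2k+1)-3 = 2k-2$ and Cohen-Macaulay type $k$.

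The main obstacle is the even-$k$ case of the BE analysis: a priori the number of extra (generator, syzygy) pairs of $I'$ in degree $k$ could be any odd integer $1, 3, 5, \dots$ without altering the Hilbert function, since such extras cancel pairwise in the Hilbert series. For the mapping cone to yield the claimed level structure one needs at most three extras (so they all cancel against the Koszul $R(-2k)^3$). Rigorously controlling this -- ideally showing the minimum $1$ for generic $L_4$ -- is the crux of the argument. A natural approach is to use Macaulay's inverse-system realization $R/I' \cong R/\mathrm{Ann}\bigl(L_4^k \circ (xyz)^{k-1}\bigr)$ together with the genericity of $L_4$ to argue that the apolar form is sufficiently general to force the minimum number of generators. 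A related secondary task is verifying the scalar block of the comparison map that produces the $R(-2k)$-cancellation; once the BE structure of $R/I'$ is fixed, this should reduce to an explicit degree-chasing argument.
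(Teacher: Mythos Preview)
The statement you are trying to prove is labeled a \emph{Conjecture} in the paper; the authors do not prove it. Instead they use precisely your liaison set-up to establish the weaker Lemma~3.2, namely that the socle of $R/I$ lies only in degrees $2k-2$ and $2k-3$. So your framework is exactly the paper's, carried one step further, and the obstacle you flag is the reason the authors stopped at a conjecture.

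That said, your proposal has the gap in \emph{both} parities, not only for even $k$. For odd $k$ you simply assert that the $k$ generators of $I'$ in degree $k-1$ are a complete generating set. Buchsbaum--Eisenbud parity allows $\beta_{1,k}(I')\in\{0,2,4,\dots\}$ here, and nothing in your argument excludes $\beta_{1,k}(I')=2$. The Hilbert function cannot detect such extras because, by self-duality of the Gorenstein resolution, each extra generator in degree $k$ is matched by an extra first syzygy in the same degree and they cancel in the Hilbert series. So the ``main obstacle'' you describe for even $k$ is equally present for odd $k$.

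There is also a second genuine gap that you relegate to a ``secondary task.'' In the mapping cone, the block of the differential from the $R(-2k)^{\beta}$ summand in homological degree~3 to the $R(-2k)^{\beta}$ summand in homological degree~2 (both coming from the shifted $I'$-resolution) is zero by minimality of the $I'$-resolution. Hence the only possible cancellation of these summands is against the Koszul $R(-2k)^3$ via the comparison map $\phi_2$. For levelness you need the scalar $3\times\beta$ block of $\phi_2$ to have full column rank $\beta$; this simultaneously forces $\beta\le 3$ and is not a formality. Finally, your suggested route through the inverse system, $I'=\mathrm{Ann}\bigl(L_4^k\circ(xyz)^{k-1}\bigr)$, does not obviously help: the dual form $L_4^k\circ(xyz)^{k-1}$ is a very special form of degree $2k-3$, not a general one, so results on \emph{generic} compressed Gorenstein algebras do not apply directly.
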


\noindent However, since we are interested in multiplication by higher powers of $L$, it turns out that we do not need $R/I$ to be level, as we now show.

\begin{lem} \label{tool}
Let $M$ be a graded module generated in the first $m$ degrees, say $b, b+1,\dots, b+m-1$, for some $m \geq 1$. Let $L$ be a general linear form. If $j \geq m$ and $\times L^j : [M]_b \rightarrow [M]_{b+j}$ is surjective, then $\times L^j : [M]_{b+i} \rightarrow [M]_{b+i+j}$ is also surjective, for all $i \geq 0$.
\end{lem}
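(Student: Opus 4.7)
The plan is to recast the surjectivity statement as a vanishing statement for the cokernel module and then propagate the vanishing using the familiar ``generators in bounded degrees forces eventual multiplication by $R_1$ to be surjective'' principle. Concretely, set $N := M / L^j M$. Then $\times L^j : [M]_{b+i} \to [M]_{b+i+j}$ is surjective if and only if $[N]_{b+i+j} = 0$. Under this reformulation the hypothesis becomes $[N]_{b+j} = 0$, and the goal becomes $[N]_{b+i+j}=0$ for every $i \geq 0$.

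The first key observation is that $N$ inherits its generators from $M$: a surjection $\bigoplus_\alpha R(-b-k_\alpha) \twoheadrightarrow M$ with $0 \le k_\alpha \le m-1$ descends to a surjection onto $N$, so $N$ too is generated in degrees $b, b+1, \ldots, b+m-1$. I would then record the following elementary fact about any graded $R$-module $N$ whose generators have degree at most $b+m-1$: for every $d \geq b + m$ one has $[N]_d = R_1 \cdot [N]_{d-1}$. Indeed, a homogeneous element $y \in [N]_d$ is a sum $\sum_\alpha f_\alpha g_\alpha$ with $f_\alpha \in R_{d-b-k_\alpha}$; since $d \geq b+m > b + k_\alpha$, each $f_\alpha$ has degree at least $1$, so $f_\alpha = x_1 h_{\alpha,1} + x_2 h_{\alpha,2} + x_3 h_{\alpha,3}$ with $h_{\alpha,t} \in R_{d-b-k_\alpha-1}$, placing $y$ in $R_1 \cdot [N]_{d-1}$.

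With that in hand, the hypothesis $j \geq m$ is used exactly once, and crisply: it guarantees $b+j \geq b+m$, so the identity $[N]_d = R_1 \cdot [N]_{d-1}$ applies to every $d \geq b+j+1$. Starting from $[N]_{b+j} = 0$ and iterating gives $[N]_{b+j+i} = R_1 \cdot [N]_{b+j+i-1} = \cdots = R_{i} \cdot [N]_{b+j} = 0$ for all $i \geq 0$, which is the desired surjectivity.

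There isn't really a serious obstacle here. The only subtle point is making sure one appeals to the inductive identity only in the range where it is valid, which is why $j \geq m$ appears in the hypothesis rather than, say, $j \geq 1$. Notably, the argument does not invoke any genericity of $L$: the conclusion holds as soon as the base surjectivity does. This is also why the lemma is stated for a general module $M$ rather than only for cyclic algebras: the same proof that handles the cyclic case (as in \cite{MMN-monomials}, Proposition~2.1) goes through verbatim, provided one starts the induction past the top generator degree of $M$.
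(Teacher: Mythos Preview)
Your argument is correct and is essentially the paper's own proof, just written out in more detail: both pass to $N = M/L^jM$, observe that $N$ is generated in degrees $\le b+m-1$, and conclude that once $[N]_{b+j}=0$ (with $b+j\ge b+m$) the module vanishes thereafter. The paper compresses your inductive step into the single phrase ``hence is zero thereafter.''
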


\begin{proof}
The module $M/(L^jM)$ is generated in degree $\leq b+m-1$ and is zero in degree $b+j \geq b+m$, hence is zero thereafter.
\end{proof}

\begin{lem} \label{socle}
Let $I = (L_1^k,\dots,L_4^k)$, where $L_1,\dots,L_4$ are general linear forms. Then the socle of $R/I$ occurs in degree $2k-2$ and possibly in degree $2k-3$.
\end{lem}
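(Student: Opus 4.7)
The plan is to reduce to the Gorenstein complete intersection $A = R/(x^k, y^k, z^k)$ by choosing coordinates so that $L_1 = x$, $L_2 = y$, $L_3 = z$, so that $R/I \cong A/L_4^k A$. The main tool is Gorenstein duality in $A$ (which has socle degree $3k-3$), which gives
\[
\dim \Ann_A(L_4^k)_j \;=\; \dim [R/I]_{3k-3-j},
\]
so this annihilator vanishes precisely for $j \le k-2$, since $R/I$ vanishes above degree $2k-2$. The strategy is to assume a nonzero socle element in degree $d \leq 2k-4$ and derive a contradiction.

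Given such a socle element represented by $f \in A_d$, I would write $xf = L_4^k g_1$, $yf = L_4^k g_2$, $zf = L_4^k g_3$ with $g_i \in A_{d+1-k}$. Since $d+1-k \leq k-3$, the $g_i$ are uniquely determined by the vanishing of $\Ann_A(L_4^k)_{d+1-k}$. The commutation $y(xf) = x(yf)$ (and its cyclic analogues) then yields $L_4^k(y g_1 - x g_2) = 0$ in $A$, and since $d+2-k \leq k-2$, vanishing of $\Ann_A(L_4^k)_{d+2-k}$ forces the Koszul relations $y g_1 = x g_2$, $z g_1 = x g_3$, $z g_2 = y g_3$ in $A$ itself.

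Next, I would lift each $g_i$ uniquely to $\tilde g_i \in R_{d+1-k}$ (unique since $I_0 = (x^k, y^k, z^k)$ starts in degree $k > d+1-k$). Because any obstruction to the Koszul relations in $R$, such as $y\tilde g_1 - x\tilde g_2$, would sit in $I_0 \cap R_{d+2-k} = 0$, the relations hold in $R$ as well. Since $(x,y,z)$ is a regular sequence, Koszul exactness in $R$ produces $h \in R_{d-k}$ with $\tilde g_1 = xh$, $\tilde g_2 = yh$, $\tilde g_3 = zh$. Reducing mod $I_0$, one gets $x(f - L_4^k \bar h) = y(f - L_4^k \bar h) = z(f - L_4^k \bar h) = 0$ in $A$, so $f - L_4^k \bar h$ lies in $\soc(A)$, which is concentrated in degree $3k-3 > d$. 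Therefore $f = L_4^k \bar h$ in $A$, i.e., $[f] = 0$ in $R/I$, a contradiction. The boundary case $d < k-1$ is immediate: $xf, yf, zf$ would lie in $I$ below its generating degree, so vanish in $R$, forcing $f = 0$.

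The main subtlety is that $d \leq 2k-4$ is exactly the threshold that makes $\Ann_A(L_4^k)$ vanish in both the degree of the $g_i$ and in the degree of the Koszul obstructions $y g_i - x g_j$. At $d = 2k-3$ the obstruction annihilator has dimension $k$, which is precisely why this argument cannot exclude socle in degree $2k-3$; this explains the ``possibly $2k-3$'' clause in the statement.
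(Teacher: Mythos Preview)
Your proof is correct and takes a genuinely different route from the paper's.

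The paper argues via liaison: it links $I$ through the complete intersection $(L_1^k,L_2^k,L_3^k)$ to a Gorenstein ideal $J$, computes the Hilbert function of $R/J$ from the Davis--Geramita--Orecchia formula, and then reads off constraints on the minimal free resolution of $I$ from the mapping cone. The shape of the resolution of $J$ forces the last free module in the resolution of $I$ to have summands only in twists $-2k$ and $-2k-1$, which is exactly the socle statement.

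Your argument stays inside the Gorenstein complete intersection $A=R/(x^k,y^k,z^k)$ and uses two facts: the Matlis-type duality $\dim \Ann_A(L_4^k)_j=\dim[R/I]_{3k-3-j}$, and exactness of the Koszul complex on $x,y,z$ in $R$. The degree bookkeeping is tight: $d\le 2k-4$ is precisely what makes both $\Ann_A(L_4^k)_{d+1-k}$ and $\Ann_A(L_4^k)_{d+2-k}$ vanish (so the $g_i$ are unique and the obstructions $yg_1-xg_2$, etc., die), and also what makes $(I_0)_{d+1-k}=(I_0)_{d+2-k}=0$ (so the lifts $\tilde g_i$ and their Koszul relations pass to $R$). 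The identification of the kernel of the three relations with $\{(xh,yh,zh)\}$ is exactly exactness of the Koszul complex at $\wedge^2 R^3$ under the Hodge-star identification $\wedge^2 R^3\cong R^3$; equivalently it follows from the UFD argument $y\tilde g_1=x\tilde g_2\Rightarrow x\mid\tilde g_1$. Your closing remark that at $d=2k-3$ the obstruction lands in $\Ann_A(L_4^k)_{k-1}$, which has dimension $\dim[R/I]_{2k-2}=k$, neatly explains why the method cannot rule out socle there.

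What each approach buys: the paper's liaison argument simultaneously yields structural information about the minimal free resolution of $I$ (and of the linked Gorenstein $J$), at the cost of invoking the mapping cone machinery. Your argument is more elementary and self-contained, needing only the duality in a graded complete intersection and Koszul exactness, and it makes transparent exactly why $2k-4$ is the cutoff.
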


\begin{proof}
Since the socle degree of $R/I$ is $2k-2$, we just have to show that $R/I$ has no socle in degree $\leq 2k-4$. The ideal $(L_1^k,L_2^k,L_3^k)$ is a complete intersection, linking the  almost complete intersection $I$ to a Gorenstein ideal $J$.  Using the formula for the Hilbert function of artinian algebras under liaison (see \cite{DGO}) and the above Hilbert function calculation, we see that $R/J$ has socle degree $(3k-3) - k = 2k-3$ and Hilbert function
\[
\left (1,3,6, \dots , \binom{k-2}{2}, \binom{k-1}{2}, \binom{k}{2}, \binom{k}{2}, \binom{k-1}{2}, \binom{k-2}{2}, \dots, 6,3,1 \right ).
\]
Let us consider the minimal free resolutions. That of $I$ has the form
\[
\begin{array}{cccccccccccccccccc}
0 & \rightarrow &
\begin{array}{c}
R(-2k-1)^k \\
\oplus \\
R(-2k)^a \\
\oplus \\
F
\end{array}
& \rightarrow & G & \rightarrow & R(-k)^4 & \rightarrow & I & \rightarrow & 0
\end{array}
\]
where
\[
\begin{array}{l}
a  \geq  0; \\ \\
\displaystyle F  =   \bigoplus_{k+2 \leq i \leq 2k-1} R(-i)^\bullet \\ \\
\displaystyle G =  \bigoplus_{k+1 \leq i \leq 2k} R(-i)^\bullet
\end{array}
\]
(we do not care what the exponents of the components of $F$ are because we will show $F = 0$; nor do we care what the exponents of the components of $G$ are).

Linking $I$ by the complete intersection, the standard mapping cone construction (splitting three copies of $R(-k)$) gives a free resolution for $J$:
\[
\begin{array}{ccccccccccccccccc}
0 & \rightarrow R(-2k) & \rightarrow &
G^\vee (-3k) & \rightarrow &
\begin{array}{c}
R(1-k)^k \\
\oplus \\
R(-k)^a \\
\oplus \\
F^\vee (-3k)
\end{array}
& \rightarrow & J & \rightarrow & 0
\end{array}
\]
where
\[
G^\vee (-3k) = R(1-2k)^\bullet \oplus R(2 - 2k)^\bullet \oplus \dots \oplus R(-k)^\bullet
\]
and
\[
F^\vee(-3k) = R(2-2k)^\bullet \oplus \dots \oplus R(-k-1)^\bullet.
\]
Now, any summand of $G^\vee (-3k)$ of the form $R(-i)$ for $i  \geq k+2$ must correspond, by the duality of the resolution, to a minimal generator of degree $2k-i \leq k-2$, which is forbidden by the Hilbert function. But any minimal generator of $J$ must be represented in this way, so $J$ only has generators of degrees $k-1$ and $k$, and $F = 0$ as desired. But returning to the minimal free resolution of $I$, this means that the socle of $R/I$ is as claimed.
\end{proof}

The following consequence allows us to confirm the maximal rank property for $\times L^j$ by checking only two degrees (which sometimes coincide).

\begin{cor} \label{two degrees}
Let $I = (L_1^k,\dots,L_4^k)$ as above. Let $j \geq 2$. Let
\[
\begin{array}{rcl}
a &  : = & \max \{ \delta \ | \ h_{R/I}(\delta-j) \leq h_{R/I}(\delta) \} \\
b & := & \min \{ \delta \ | \ h_{R/I}(\delta-j) \geq h_{R/I}(\delta) \}.
\end{array}
\]
If $\times L^j : [R/I]_{a-j} \rightarrow [R/I]_{a}$ is injective and $\times L^j : [R/I]_{b-j} \rightarrow [R/I]_{b}$ is surjective then $\times L^j$ has maximal rank in all degrees.
\end{cor}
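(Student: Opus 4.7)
My plan is to reduce the corollary to two separate propagation statements---injectivity of $\times L^j$ propagates downward from degree $a$, and surjectivity propagates upward from degree $b$---and then to verify that the two resulting regimes cover every degree. A clean preliminary observation makes this split work: at any single degree $\delta$, injectivity of $\times L^j\colon [R/I]_{\delta-j}\to [R/I]_\delta$ already implies maximal rank, for the rank then equals $h_{R/I}(\delta-j)$, which must be $\le h_{R/I}(\delta)$ and hence coincides with $\min\{h_{R/I}(\delta-j),h_{R/I}(\delta)\}$; symmetrically, surjectivity implies maximal rank. Hence it suffices to establish injectivity for all $\delta\le a$ and surjectivity for all $\delta\ge b$. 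These two ranges cover every relevant integer $\delta$: otherwise some $\delta$ with $a<\delta<b$ would simultaneously satisfy $h_{R/I}(\delta-j)>h_{R/I}(\delta)$ (since $\delta>a$) and $h_{R/I}(\delta-j)<h_{R/I}(\delta)$ (since $\delta<b$), which is impossible. Thus $b\le a+1$.

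The upward propagation of surjectivity is immediate from the standard observation recalled just before Lemma~\ref{tool} (\cite{MMN-monomials}, Proposition 2.1): once $\times L^j\colon[R/I]_{b-j}\to[R/I]_b$ is surjective, the same holds for $\times L^j\colon[R/I]_{b+i-j}\to[R/I]_{b+i}$ for every $i\ge 0$. So this half requires no further work.

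The heart of the argument, and the main obstacle, is the downward propagation of injectivity: we cannot yet invoke the conjectural levelness of $R/I$, so something else must do the work. Let $K:=(I:L^j)/I\subset R/I$ be the graded submodule that is the kernel of multiplication by $L^j$. By hypothesis $K_{a-j}=0$; I would argue by contradiction that $K_\delta=0$ for every $\delta<a-j$ as well. Suppose otherwise and let $\alpha<a-j$ be maximal with $K_\alpha\ne 0$, and pick $0\ne f\in K_\alpha$. For every linear form $\ell\in R_1$ we have $\ell f\in K_{\alpha+1}=0$ in $R/I$, so $f$ lies in the socle of $R/I$. Lemma~\ref{socle} then forces $\alpha\in\{2k-3,2k-2\}$. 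On the other hand, $a$ is at most the socle degree $2k-2$ of $R/I$ (the Hilbert function vanishes in degrees $\ge 2k-1$, so any larger $\delta$ is trivially outside the meaningful range defining $a$), whence $\alpha\le a-j-1\le 2k-3-j$, which contradicts $\alpha\ge 2k-3$ because $j\ge 2$. Consequently $K$ vanishes in every degree $\le a-j$, giving injectivity of $\times L^j$ throughout $\{\delta\le a\}$ and completing the proof. Lemma~\ref{socle} is precisely the ingredient that lets us bypass the level assumption.
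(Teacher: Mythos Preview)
Your proof is correct. The surjectivity half and the reduction to the two ranges $\delta\le a$, $\delta\ge b$ match the paper exactly. For the injectivity half you take a closely related but more direct route than the paper: the paper passes to the canonical module $M$ of $R/I$, translates injectivity of $\times L^j$ on $R/I$ into surjectivity of $\times L^j$ on $M$, observes via Lemma~\ref{socle} that $M$ is generated in its first two degrees, and then invokes Lemma~\ref{tool} (which needs precisely $j\ge 2$) on a suitable truncation of $M$ to propagate surjectivity, dualizing back at the end. You instead argue directly on $R/I$: a nonzero element of the kernel $K$ in the maximal degree $\alpha<a-j$ is annihilated by $R_1$ (since $K_{\alpha+1}=0$), hence lies in the socle, which Lemma~\ref{socle} forces into degree $\ge 2k-3$, contradicting $\alpha\le a-j-1\le 2k-3-j$. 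Both arguments rest on exactly the same two ingredients, Lemma~\ref{socle} and the hypothesis $j\ge 2$; yours avoids the canonical-module detour and Lemma~\ref{tool} entirely, while the paper's version packages the propagation mechanism into a reusable lemma. Your parenthetical remark that $a$ should be read within the range where $h_{R/I}(\delta)>0$ is the intended interpretation; the paper leaves this implicit.
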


\begin{proof}
The fact that $\times L^j$ is surjective in all degrees $\geq b$ was noted above and is standard. We have to show the analogous result for injectivity of $\times L^j$ for all degrees $\leq a$.

Consider  the canonical module, $M$, of $R/I$. Since $R/I$ is artinian, $M$ is isomorphic to a shift of the $k$-dual of $R/I$. The injectivity of $\times L^j : [R/I]_a \rightarrow [R/I]_{a+j}$ is equivalent to the surjectivity of the dual homomorphism on $M$, say from $[M]_{a'}$ to $[M]_{a'+j}$. By abuse of notation we continue to write this as $\times L^j$.

By Lemma \ref{socle}, $M$ is generated in the first two degrees, at most. If $a'$ is not the initial degree of $M$, let $N$ be the truncation of $M$ in degree $a'$, i.e. $N = \bigoplus_{i \geq a'} [M]_i$. $N$ is generated in the first degree, unless $N=M$ in which case it may be generated in the first two degrees. Either way, Lemma \ref{tool} gives that $\times L^j$ is surjective in all degrees $\geq a'$. Then by  duality,  $\times L^j$ is injective in all degrees $\leq a$.
\end{proof}

\begin{rem} \label{find a b}
Of course it is important to determine the values of $a$ and $b$ in order to be able to apply Corollary \ref{two degrees}. Our method will be to take advantage of the fact that four general points in $\mathbb P^2$ are a complete intersection, and use Lemma \ref{res of ci}. We  note here  that we will implicitly use the fact that the Hilbert function is unimodal (a fact that is true not just for four powers of linear forms but in fact for any ideal generated by powers of linear forms in $k[x,y,z]$), which is an immediate consequence of the fact that the algebra has the weak Lefschetz property \cite{SS}, so the unimodality follows from \cite{HMNW} Remark 3.3.
\end{rem}

The following is central to determining the values of $a$ and $b$ in Corollary \ref{two degrees}.
For any $\delta$ we have the exact sequence
\begin{equation} \label{std exact seq}
[R/(L_1^k,\dots,L_4^k)]_{\delta-j} \stackrel{\times L^j}{\longrightarrow} [R/(L_1^k,\dots,L_4^k)]_{\delta} \rightarrow
[R/(L_1^k,\dots,L_4^k,L^j)]_{\delta} \rightarrow 0.
\end{equation}
Let $\wp_i$ be the point dual to $L_i$ and let $\wp_1 \cap \dots \cap \wp_4 = I_X$. We will define the following functions of $\delta, j$ and~$k$.
\[
C_1 = \dim [R/(L_1^k, \dots, L_4^k)]_{\delta-j} \ \ \hbox{ and } \ \ C_2 = \dim [R/(L_1^k, \dots, L_4^k)]_{\delta} .
\]
We would like to apply Theorem \ref{thm:inverse-system}. It is certainly no loss of generality to assume that $\delta \geq k$ since in smaller degrees $R/I$ coincides with the polynomial ring, where maximal rank  holds. Thus we have
\[
\begin{array}{cccccccc}
C_2 & = & \dim [R/(L_1^k,\dots,L_4^k)]_{\delta} & = & \dim [\wp_1^{\delta-k+1} \cap \dots \cap \wp_4^{\delta-k+1}]_{\delta} & = & \dim [I_X^{\delta-k+1}]_{\delta} .
\end{array}
\]
We also have
\[
\begin{array}{llllllll}
C_1 & = & \dim [R/(L_1^k,\dots,L_4^k)]_{\delta-j} \\ \\
& = & \left \{
\begin{array}{ll}
\dim [R]_{\delta-j},  & \hbox{if } \delta \leq j+k-1;\\ \\
 \dim [\wp_1^{\delta-j-k+1} \cap \dots \cap \wp_4^{\delta-j-k+1}]_{\delta-j}  =  [I_X^{\delta-j-k+1}]_{\delta-j}, & \hbox{if } \delta \geq j+k-1  . \\
 \end{array}
 \right.
 \end{array}
 \]

Since the Hilbert function of $R/I$ is unimodal (Remark \ref{find a b}), we simply need to set $C_1 - C_2$ equal to zero and find the nearest integer values for $\delta$, as we make precise now. We make use of Lemma \ref{res of ci}.

\medskip

\underline{Case 1}: \  First we assume that $\delta \geq j+k-1$. Notice that we adopt the convention that $I_X^0 = R$. We have the resolutions
\[
0 \rightarrow R(-2\delta+2j+2k -4)^{\delta-j-k+1} \rightarrow R(-2\delta +2j+ 2k-2)^{\delta-j-k+2} \rightarrow I_X^{\delta -j -k+1} \rightarrow 0
\]
and
\[
0 \rightarrow R(-2\delta+2k -4)^{\delta-k+1} \rightarrow R(-2\delta + 2k-2)^{\delta-k+2} \rightarrow I_X^{\delta -k+1} \rightarrow 0.
\]
Using this, we have
\[
\begin{array}{rcl}
C_1-C_2 & = & \displaystyle
(\delta -j-k+2) \binom{-\delta +j+2k}{2} - (\delta -j-k+1) \binom{-\delta+j+2k-2}{2} \\ \\
&& \displaystyle - (\delta -k+2) \binom{-\delta +2k}{2} + (\delta -k+1) \binom{-\delta + 2k-2}{2}
\end{array}
\]
from which an elementary but tedious calculation gives
\begin{equation} \label{critical values}
C_1 - C_2 = 3 j \delta  - 4 kj - 3 \binom{j-1}{2} + 3.
\end{equation}
In particular, we have the following values.

\begin{equation} \label{critical values 2}
\begin{array}{c|ccccccccc}
j & C_1 - C_2 \\ \hline
2 & 6 \delta - 8 k + 3 \\
3 & 9 \delta - 12 k \\
4 & 12 \delta - 16 k -6 \\
5 & 15 \delta - 20 k -15
\end{array}
\end{equation}

\begin{rem} \label{values of a,b}
Notice that when $\delta \geq j+k-1$ we have
\[
\begin{array}{rcl}
a & = & \max \{ \delta \in \mathbb Z \ | \ C_1 - C_2 \leq 0 \} \\
b & = & \min \{ \delta \in \mathbb Z \ | \ C_1 - C_2 \geq 0 \}.
\end{array}
\]
More precisely, using (\ref{critical values}), an easy calculation gives that for $\delta \geq j+k-1$ we get

\begin{itemize}

\item If $j$ is odd then
\[
a =
\left \{
\begin{array}{ll}
4k_0 + \frac{j-1}{2} -1, & \hbox{if $k = 3k_0$}; \\
4k_0 + \frac{j-1}{2}  ,  & \hbox{if $k = 3k_0+1$}; \\
4k_0 + \frac{j-1}{2} +1 , & \hbox{if $k = 3k_0+2$}.
\end{array}
\right.
\]
\[
b =
\left \{
\begin{array}{ll}
4k_0 + \frac{j-1}{2} -1, & \hbox{if $k = 3k_0$}; \\
4k_0 + \frac{j-1}{2} +1 ,  & \hbox{if $k = 3k_0+1$}; \\
4k_0 + \frac{j-1}{2} +2 , & \hbox{if $k = 3k_0+2$}.
\end{array}
\right.
\]

\item If $j$ is even then
\[
a =
\left \{
\begin{array}{ll}
4k_0 + \frac{j}{2} -2 ,& \hbox{if $k = 3k_0$}; \\
4k_0 + \frac{j}{2} -1 ,  & \hbox{if $k = 3k_0+1$}; \\
4k_0 + \frac{j}{2} +1  ,& \hbox{if $k = 3k_0+2$}.
\end{array}
\right.
\]
\[
b =
\left \{
\begin{array}{ll}
4k_0 + \frac{j}{2} -1 ,& \hbox{if $k = 3k_0$}; \\
4k_0 + \frac{j}{2}  ,  & \hbox{if $k = 3k_0+1$}; \\
4k_0 + \frac{j}{2} +2 , & \hbox{if $k = 3k_0+2$}.
\end{array}
\right.
\]

\end{itemize}
\end{rem}

\medskip

\underline{Case 2}: \  Now we assume that $k \leq \delta \leq j+k-2$. Then
\[
\begin{array}{rcl}
C_1 - C_2 & = & \displaystyle \binom{\delta -j+2}{2} - (\delta-k+2) \binom{- \delta +2k}{2} + (\delta-k+1)\binom{-\delta+2k-2}{2} \\ \\
& = & \displaystyle \binom{\delta-j+2}{2} - \binom{- \delta +2k}{2} + (2 \delta -4k+3)(\delta-k+1).
\end{array}
\]

\begin{rem} \label{bad delta}
In proving our main results in the next section, an important issue is  that we have two formulas for the value of $C_1 - C_2$, depending on the relation between $\delta, j$ and $k$. The value of $C_2$ is not at issue, but the value of $C_1$ is. We would like to use our formulas from Remark \ref{values of a,b} to make our calculations in the proofs given in the next section. However, we  sometimes need to use  values of $\delta $ as low as $a-1$, and we need to understand which values of $\delta$, $j$ and $k$ force us to use Case 2 above instead of Case 1.

%Fortunately for us, in this paper we are only interested in small values of $j$, namely $2 \leq j \leq 5$. When $k \leq \delta \leq j+k-1$, this means that we have to make special cases of the following values of $\delta$ and $C_1-C_2$:
%\[
%\begin{array}{l|c}
%\delta & C_1-C_2 \\ \hline
%k & \displaystyle  -jk + \frac{j^2-3j+8}{2} \\
%k+1 & \displaystyle -jk + \frac{j^2-5j+24}{2} \\
%k+2 & \displaystyle -jk + \frac{j^2 - 7j + 48}{2} \\
%k+3 & \displaystyle -jk + \frac{j^2 - 9j + 80}{2} \\
%k+4 & \displaystyle -jk + \frac{j^2-11j + 120}{2}
%\end{array}
%\]
%More precisely, in our case we have
%\[
%\begin{array}{c|lcccccccc}
%j & \delta & C_1-C_2 \\ \hline
%2 & k & 3-2k &  \\
%& k+1 & 9-2k \\
%3 & k & 4-3k \\
%& k+1 &  9-3k  \\
%& k+2 & 18-3k \\
%4 & k & 6-4k  \\
%& k+1 & 10-4k \\
%& k+2 & 18-4k \\
%& k+3 & 30-4k \\
%5 & k & 9-5k  \\
%& k+1 & 12-5k  \\
%& k+2 & 19-5k \\
%& k+3 & 30-5k \\
%& k+4 & 45-5k
%\end{array}
%\]

\end{rem}

%%%%%%%%%%%%%%%%%%%%%%%%%%%%%%%%%%%%%%%%%%%%%%%%%%%

\section{Multiplication by $L^2$} \label{section L2}

For ideals generated by powers of linear forms in $k[x,y,z]$, the following two results are known.

\begin{thm} [\cite{SS}, main theorem]
An artinian quotient of $k[x,y,z]$ by powers of (arbitrary) linear forms has WLP.
\end{thm}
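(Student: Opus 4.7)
The plan is to translate WLP into a question about dimensions of linear systems of plane curves with general fat-point base loci, via Emsalem--Iarrobino duality, and then to resolve that question by reducing to standard form. Using the exact sequence at the start of Section~2 (together with its injectivity counterpart), WLP for $R/I$ is equivalent to the identity
\[
\dim_k[R/(I,L)]_\delta = \max\{\dim_k[R/I]_\delta - \dim_k[R/I]_{\delta-1},\, 0\}
\]
for every $\delta$. Since $R/(I,L) \cong k[x,y]/(\overline L_1^{a_1}, \ldots, \overline L_r^{a_r})$, its Hilbert function is immediately accessible: quotients of $k[x,y]$ by powers of (arbitrary) linear forms even satisfy the SLP (classical), so the right-hand quantity is explicit.

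For the two remaining dimensions I apply Theorem~\ref{thm:inverse-system} to rewrite
\[
\dim_k[R/I]_\delta = \dim_k \cL_2(\delta;\, \delta-a_1+1, \ldots, \delta-a_r+1)
\]
whenever $\delta \geq \max\{a_i\}$, and similarly in degree $\delta-1$. The WLP claim then becomes a purely numerical comparison among three quantities, two of which are fat-point linear-system dimensions in $\mathbb{P}^2$. To evaluate these, the plan is to apply Cremona transformations (Lemma~\ref{lem:Cremona}) repeatedly to push each system into standard form, where non-speciality holds automatically in $\mathbb{P}^2$; a B\'ezout-type base-locus reduction (Remark~\ref{bezout}) may be needed beforehand to ensure the Cremona hypothesis is legal. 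This should ultimately express each dimension as a combination of binomial coefficients, at which point the identity can be checked directly.

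The hard part will be the bookkeeping: different ranges of $\delta$ relative to the exponents $a_i$ demand different sequences of reductions, and at each stage one must verify that the Cremona hypothesis $b_i + m \geq 0$ (for $i=1,2,3$) remains satisfied after the previous reductions. A natural organization is induction on $r$, with base case $r \leq 3$ supplied by Stanley--Watanabe. An alternative, perhaps cleaner route avoiding the heavy case analysis is to work directly with the syzygy bundle of $I$ on $\mathbb{P}^2$: WLP translates into a statement about its splitting type on a general line, which one can attack via Grauert--M\"ulich together with an explicit Chern-class/dimension count, taking advantage of the fact that powers of general linear forms produce particularly well-behaved syzygy bundles.
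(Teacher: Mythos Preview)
This theorem is not proved in the present paper; it is quoted as the main result of \cite{SS} (with an alternative proof referenced in \cite{MMN-lin forms}). There is therefore no ``paper's own proof'' here to compare against.

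That said, your primary plan has a genuine gap. The theorem concerns \emph{arbitrary} linear forms, but the machinery you intend to invoke after the Emsalem--Iarrobino step is valid only for \emph{general} points. The duality of Theorem~\ref{thm:inverse-system} sends the linear forms $L_i$ to specific dual points $P_i$, so if the $L_i$ are arbitrary, the $P_i$ are arbitrary as well. The key fact you want to use---that a planar linear system in standard form is non-special---holds only for points in general position; for special configurations the expected dimension can fail badly, and indeed the paper itself emphasizes (citing \cite{DIV} and \cite{CHMN}) that already for $\times L^2$ the maximal-rank question is delicate and depends on the geometry of the dual point set. So your Cremona/standard-form bookkeeping, however carefully organized, cannot close the argument for arbitrary $L_i$.

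Your closing ``alternative route'' is in fact much closer to how the result is actually proved in \cite{SS}: Schenck and Seceleanu work with the syzygy bundle on $\mathbb{P}^2$ and analyze its restriction to a general line, using a semicontinuity/Grauert--M\"ulich style argument to force the required splitting type. That approach does not require the $L_i$ to be general and is what makes the theorem go through in full.
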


In the following sections we will see that multiplication by $L^j$ for $j \geq 3$ does not necessarily have maximal rank, even when $I$ is an almost complete intersection. This leaves the question of $\times L^2$. In \cite{DIV} and \cite{CHMN} it is shown that there exist ideals generated by powers of linear forms for which $\times L^2$ does not have maximal rank in all degrees, so the remaining question is what happens for powers of general linear forms. When $I$ is an almost complete intersection we have the following result:

\begin{thm} [\cite{MMN-lin forms}, Proposition 4.7]
Let $L_1,\dots,L_4, L$ be five general linear forms of $R = k[x,y,z]$. Let $I$ be the ideal $(L_1^{a_1},\dots,L_4^{a_4})$. Let $A = R/I$. Then, for each integer $j$, the multiplication map $\times L^2 : [A]_{j-2} \rightarrow [A]_j$ has maximal rank.
\end{thm}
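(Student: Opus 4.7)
The plan is, first, to translate the statement into one about linear systems of fat points in $\mathbb{P}^2$. Using the exact sequence (\ref{std exact seq}) with $j=2$ together with the Emsalem--Iarrobino duality (Theorem \ref{thm:inverse-system}) applied once with the four points dual to $L_1,\dots,L_4$ and once after adjoining the fifth point dual to $L$, maximal rank of $\times L^2:[A]_{\delta-2}\to[A]_\delta$ becomes equivalent to the equality
\[
\dim_k \cL_2(\delta;\,\delta-a_1+1,\delta-a_2+1,\delta-a_3+1,\delta-a_4+1,\,\delta-1)=\max\bigl\{h_A(\delta)-h_A(\delta-2),\,0\bigr\},
\]
for every $\delta\ge\max_i a_i$. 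In smaller degrees $[A]_{\delta-2}=[R]_{\delta-2}$, where injectivity follows from the trivial observation that no nonzero form of low degree is a multiple of $L^2$ modulo $I$ in that range, so those cases are immediate.

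Second, I would use Lemma \ref{res of ci}: the four general points $P_1,\dots,P_4$ form a complete intersection of two conics, so $I_X^m$ has a known two-term resolution and $\dim[A]_\delta$, $\dim[A]_{\delta-2}$ are computable in closed form (this is precisely the kind of computation carried out in the preparation section leading to equation (\ref{critical values})). Combined with the unimodality of $h_A$ — which holds since $A$ has WLP by \cite{SS} and hence its Hilbert function is unimodal by \cite{HMNW}, as recalled in Remark \ref{find a b} — the problem reduces (exactly as in Corollary \ref{two degrees}) to verifying injectivity at $a:=\max\{\delta\mid h_A(\delta-2)\le h_A(\delta)\}$ and surjectivity at $b:=\min\{\delta\mid h_A(\delta-2)\ge h_A(\delta)\}$. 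This is a finite check whose location depends only on the exponents $a_1,\dots,a_4$.

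Third, I would apply the Cremona reduction of Lemma \ref{lem:Cremona} to collapse the linear system $\cL_2(\delta;\delta-a_1+1,\dots,\delta-a_4+1,\delta-1)$. The biggest multiplicity is $\delta-1$ (from $L^2$); after one Cremona step based at the three largest multiplicities, the three corresponding entries drop uniformly by $|m|=|(b_1+b_2+b_3)-\delta|$, and one iterates until the hypothesis $(r-1)j\ge b_1+b_2+b_3$ together with a weakly-decreasing ordering holds. At that point the system is in standard form, hence non-special, and its dimension equals the expected one. A direct comparison against the closed-form values of $h_A(\delta)-h_A(\delta-2)$ at $\delta=a$ and $\delta=b$ then gives the equality required in the first step.

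The main obstacle is the bookkeeping around the transition degrees, for two reasons. First, the extra multiplicity $\delta-1$ from $L^2$ breaks the symmetry of the four multiplicities coming from the $L_i$, so the successive Cremona transformations must be ordered carefully and one must verify $b_i+m\ge 0$ along the way. Second, one must treat separately the boundary range analogous to the Case~1 / Case~2 split of Remark \ref{bad delta}, where the formula for $h_A(\delta-2)$ switches from the inverse-system expression to the full polynomial-ring value $\binom{\delta}{2}$; in that transitional range the expected-dimension target changes shape and must be matched by hand. Once those transitions are handled, non-speciality in standard form closes the argument uniformly in $(a_1,\dots,a_4)$.
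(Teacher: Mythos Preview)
This theorem is not proved in the present paper at all: it is quoted verbatim from \cite{MMN-lin forms}, Proposition~4.7, and the paper only cites it as background before proving its own Theorem~\ref{L^2}. So there is no ``paper's own proof'' to compare against here.

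That said, your proposal has a real gap. Step~2 leans on Lemma~\ref{res of ci} and the computations of Section~\ref{preparation section} (in particular the closed form~(\ref{critical values}) and Corollary~\ref{two degrees}), but all of those are derived under the hypothesis $a_1=\cdots=a_4=k$. In that uniform case the inverse-system ideal is $\wp_1^{\delta-k+1}\cap\cdots\cap\wp_4^{\delta-k+1}=I_X^{\delta-k+1}$, a \emph{power} of the complete-intersection ideal $I_X$, and Lemma~\ref{res of ci} gives its resolution. For arbitrary exponents $a_1,\dots,a_4$ the relevant ideal is $\wp_1^{\delta-a_1+1}\cap\cdots\cap\wp_4^{\delta-a_4+1}$ with generally distinct multiplicities; this is not a power of $I_X$, there is no two-term resolution available, and the formula~(\ref{critical values}) simply does not apply. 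Likewise, Corollary~\ref{two degrees} rests on Lemma~\ref{socle}, whose liaison argument is specific to the uniform ideal $(L_1^k,\dots,L_4^k)$; for mixed powers you would first have to redo the socle analysis before you can reduce to two degrees.

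Your Step~3 (Cremona reductions to standard form) is the right general mechanism, and it is in fact what \cite{MMN-lin forms} uses. But as written you have only asserted that the reductions terminate in standard form and that the resulting expected dimension matches $h_A(\delta)-h_A(\delta-2)$; you have not carried this out, and with four independent parameters $a_1,\dots,a_4$ the case analysis and the verification of the side conditions $b_i+m\ge 0$ at each step are the actual content of the proof. In short: drop the appeal to Lemma~\ref{res of ci} and the uniform-power preparation, and either consult \cite{MMN-lin forms} directly or redo the Cremona bookkeeping genuinely in the mixed-exponent setting.
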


Improving on this result, our next goal will be to prove that if $I$ is generated by  {\em any} number, $r$, of uniform powers of general linear forms then $R/I$ has the property that $\times L^2$ has maximal rank in all degrees. Since the case $r\le 4$ is already known, we will assume that $r\ge 5$. Recall that  {\em any} ideal generated by uniform powers of general linear forms has WLP. In particular, its Hilbert function is unimodal and we will now determine its peak(s).

\begin{lem} \label{key_lem}
Let $L_1,\dots,L_r\in k[x,y,z]$ be $r \geq 5$ general linear forms. Let $I$ be the ideal $(L_1^{k},\dots,L_r^{k})$. Write $k=(r-1)k_0+e$ with $0\le e\le r-2$. It holds:
\begin{itemize}
\item[(i)] If $2\le k \le r-2$ then $R/I$ has exactly one peak at $k-1$.
\item[(ii)] If $k_0\ge 1$ and $1\le e\le r-2$ then $R/I$ has exactly one peak at $rk_0+e-1.$
\item[(iii)] If $k_0\ge 1$ and $ e=0$ then $R/I$ has exactly two peaks at $rk_0-2$ and $rk_0-1.$
\end{itemize}
\end{lem}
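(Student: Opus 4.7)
The plan is to reduce the computation to a two-variable quotient. Since $L$ is general, $R/(I,L)\cong k[x,y]/(M_1^k,\dots,M_r^k)$ for general linear forms $M_i$ of $S:=k[x,y]$. Ideals of fat points in $\mathbb{P}^1$ are principal, so Theorem~\ref{thm:inverse-system} applied to $S$ gives
\[
\dim_k [R/(I,L)]_\delta \;=\; \max\bigl\{\,0,\; \delta+1-r(\delta-k+1)\,\bigr\}\quad (\delta\ge k),
\]
with $\dim_k[R/(I,L)]_\delta=\delta+1$ for $\delta\le k-1$. Writing $k=(r-1)k_0+e$ as in the statement, I compute the largest $d^*$ for which $[R/(I,L)]_{d^*}\ne 0$: it equals $k-1$ in case~(i), $rk_0+e-1$ in case~(ii), and $rk_0-2$ in case~(iii).

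Next, WLP of $R/I$ (the Schenck--Seceleanu theorem) says that $\times L$ has maximal rank in each degree, and from the exact sequence $[R/I]_{\delta-1}\xrightarrow{\times L}[R/I]_\delta\to[R/(I,L)]_\delta\to 0$ the map $\times L$ is surjective precisely when $[R/(I,L)]_\delta=0$. Combined with the standard ``once surjective, always surjective'' fact (cf.\ \cite{MMN-monomials} Proposition~2.1), this forces $\times L$ to be injective for $\delta\le d^*$ (giving $h_{R/I}(\delta-1) < h_{R/I}(\delta)$, since $[R/(I,L)]_\delta\ne 0$ there), and surjective for $\delta\ge d^*+1$ (giving $h_{R/I}(\delta-1) \ge h_{R/I}(\delta)$). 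Hence the peak includes $d^*$, and the only remaining question is whether the plateau extends beyond $d^*$.

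To settle this, I would compute $h_{R/I}$ at $d^*$ and $d^*+1$ via Theorem~\ref{thm:inverse-system}, which identifies $h_{R/I}(\delta)=\dim\cL_2(\delta;(\delta-k+1)^r)$ for $\delta\ge k$. Provided the relevant $\mathbb{P}^2$ linear systems are non-special, the difference of expected dimensions telescopes cleanly to $(d^*+2)-r(d^*-k+2)$, which evaluates to $k+1-r$ in case~(i), to $e+1-r$ in case~(ii), and to $0$ in case~(iii). In cases~(i) and~(ii) this is $\le -1$ (using $k\le r-2$ and $e\le r-2$), so the peak is unique; in case~(iii) it vanishes, giving a plateau of length at least two. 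The analogous calculation at $d^*+2$ in case~(iii) produces the increment $(d^*+3)-r(d^*-k+3)=1-r\le -4$, confirming that the plateau has length exactly two.

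The main obstacle is checking non-speciality of the $\mathbb{P}^2$ linear systems $\cL_2(d^*;\cdot)$, $\cL_2(d^*+1;\cdot)$ and (in case~(iii)) $\cL_2(d^*+2;\cdot)$. The standard-form criterion stated after Lemma~\ref{lem:Cremona} reduces to an inequality of the shape $(r-3)k_0+e\ge 1$ or $\ge 3$, which holds for $r\ge 5$ in every instance except essentially the boundary case $r=5$, $k_0=1$. Those boundary cases I would settle by applying Lemma~\ref{lem:Cremona} to reduce to a system in standard form, or by a brief direct calculation.
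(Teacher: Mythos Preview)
Your approach is correct and in fact cleaner in one respect than the paper's own argument. The paper proceeds entirely in $\mathbb{P}^2$: for each case it computes three (or four) consecutive values $h_{R/I}(d^*-1),\,h_{R/I}(d^*),\,h_{R/I}(d^*+1)$ via Theorem~\ref{thm:inverse-system}, verifies that each relevant linear system $\cL_2(\cdot;\cdot)$ is in standard form (handling $k_0=1$ separately), and then reads off the signs of the successive differences. You instead first pass to $R/(I,L)\cong k[x,y]/(M_1^k,\dots,M_r^k)$, where the dual fat-point scheme lives on $\mathbb{P}^1$ and the Hilbert function is computable on sight; combined with WLP this pins down the last degree $d^*$ of strict increase without any $\mathbb{P}^2$ non-speciality check. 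This eliminates the need to compute $h_{R/I}(d^*-1)$ and is a genuine simplification. For the plateau question, however, you are forced back to the same $\mathbb{P}^2$ computation the paper does (comparing $h_{R/I}(d^*)$ and $h_{R/I}(d^*+1)$, and in case~(iii) also $h_{R/I}(d^*+2)$), including the same standard-form verification and the same boundary case at $k_0=1$.

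Two minor points to tighten. First, Theorem~\ref{thm:inverse-system} is stated in the paper only for $k[x,y,z]$; the Emsalem--Iarrobino duality is of course valid in two variables as well, but you should say so explicitly (or simply observe that for $r\ge 2$ general linear forms in $k[x,y]$ the ideal $(M_1^k,\dots,M_r^k)$ contains a regular sequence and the Hilbert series is the Fr\"oberg truncation, which gives your formula directly). Second, your boundary analysis is slightly imprecise: the only genuine failure of the standard-form inequality among the systems you need is $\cL_2(r;2^r)$ for $r=5$ in case~(iii) with $k_0=1$ (i.e.\ $k=4$), and that single system is easily seen to be non-special. In case~(ii) the inequality $(r-3)k_0+e\ge 3$ holds for all $r\ge 5$, $k_0\ge1$, $e\ge1$, so no separate treatment is needed there.
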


\begin{proof} (i) For $2\le k \le r-2$, we have $$\dim [R/I]_{k-2}=\dim R_{k-2}={k\choose 2},$$
$$\dim [R/I]_{k-1}=\dim R_{k-1}={k+1\choose 2} \text{ and }$$
$$\dim [R/I]_{k}={k+2\choose 2}-r.$$
Hence, $R/I$ has a peak at $k-1$.

\vskip 2mm
(ii) Let us first assume that $k_0\ge 2$. We call $A_i=\dim[R/I]_{rk_0+e-1+i}$ with $i=-1,0,1$. We have to prove that $A_0-A_i>0$ for $i=-1,1$. Let us compute $A_i$ for $i=-1,0,1$. Since $k_0\ge 2$ we can apply Theorem \ref{thm:inverse-system} and we get
$$
A_i=\dim[R/I]_{rk_0+e-1+i}=\dim_k \cL_2 (rk_0+e-1+i;(k_0+i)^r)={rk_0+e+1+i\choose 2}-r{k_0+i+1\choose 2}
$$
where the last equality follows from the fact that the linear system $\cL_2 (rk_0+e-1+i;k_0+i^r)$ is in standard form and, hence, it is non-special.
Now, we easily check that $A_0-A_{-1}=e>0$ and $ A_0-A_1=r-1-e>0.$ Therefore, $R/I$ has a peak at $rk_0+e-1$.

For $k_0=1$ we have  $$\dim [R/I]_{k-1}=\dim R_{k-1}={k+1\choose 2},$$
$$
\dim [R/I]_{k}={k+2\choose 2}-r \text{ and }
$$
$$
\dim [R/I]_{k+1}= \dim \cL_2 (k+1;2^r)={k+3\choose 2}-3r.
$$
Since $\dim [R/I]_{k}-\dim [R/I]_{k-1}=k+1-r>0$ and $\dim [R/I]_{k}-\dim [R/I]_{k+1}=2r-(k+2)>0$, $R/I$ has a peak at $k$.

\vskip 2mm
(iii) First we assume that $k_0\ge 2$. Call $B_i=\dim[R/I]_{rk_0-2+i}$ with $i=-1,0,1,2$. We have
$$
B_i=\dim[R/I]_{rk_0-2+i}=\dim_k \cL_2 (rk_0-2+i; (k_0-1+i)^r)={rk_0+i\choose 2}-r{k_0+i\choose 2}.
$$
Notice that if $k_0=2$, we have $B_{-1}={2r-1\choose 2}$. In all cases, we get $B_0=B_1$, $B_0-B_2=r-1$ and $B_0-B_{-1}=r-1$ and we conclude that $R/I$ has exactly two peaks, at $rk_0-2$ and $rk_0-1$.

Finally for $k_0=1$, we have $B_{-1}=\dim[R]_{r-3}={r-1\choose 2}$,  $B_{0}=\dim[R]_{r-2}={r\choose 2}$ and
$$
B_i=\dim[R/I]_{r-2+i}=\dim_k \cL_2 (r-2+i;i^r)={r+i\choose 2}-r{i+1 \choose 2}
$$
for $i=1,2$. Again $B_0=B_1$, $B_0-B_2=r-1$, $B_0-B_{-1}=r-1$ and  $R/I$ has exactly two peaks, at $r-2$ and $r-1$.
\end{proof}

\begin{thm} \label{L^2}
 Let $L_1,\dots,L_r,L\in k[x,y,z]$ be $r+1$ general linear forms. Let $I$ be the ideal $(L_1^{k},\dots,L_r^{k})$. Then, for each integer $j$, the multiplication map
 $$\times L^2:[R/I]_{j-2}\longrightarrow [R/I]_{j}$$
 has maximal rank.
\end{thm}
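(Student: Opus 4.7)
The plan is to translate the question via the Emsalem--Iarrobino duality (Theorem~\ref{thm:inverse-system}) into the computation of a single plane fat-point linear system, and then verify the resulting identity at a small set of critical degrees dictated by Lemma~\ref{key_lem}. For $\delta \geq k$,
\[
\dim_k [R/(I,L^2)]_\delta \;=\; \dim_k \cL_2\bigl(\delta;\,(\delta-k+1)^r,\,\delta-1\bigr),
\]
and maximal rank of $\times L^2:[R/I]_{\delta-2}\to[R/I]_\delta$ is equivalent to this dimension being $\max\{h_{R/I}(\delta)-h_{R/I}(\delta-2),0\}$. For $\delta<k$ we have $[R/I]_\delta=[R]_\delta$, so maximal rank is automatic.

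Since the Hilbert function of $R/I$ is unimodal with peak(s) located explicitly in Lemma~\ref{key_lem}, surjectivity of $\times L^2$ propagates forward from the first degree where $h_{R/I}(\delta-2)\geq h_{R/I}(\delta)$ (because $R/(I,L^2)$ is an artinian quotient algebra). For injectivity we dualize using the canonical module of $R/I$ and apply Lemma~\ref{tool}; this requires showing that the socle of $R/I$ is concentrated in at most two consecutive degrees, an analogue for general $r$ of Lemma~\ref{socle}, which can be established from the minimal free resolution of $I$ by the same kind of linkage/Hilbert-function bookkeeping used there. With those two propagations in hand, it suffices to verify the identity above at one critical degree just before the peak and one just after, with case (iii) of Lemma~\ref{key_lem} contributing two such degrees because of the two-peak plateau.

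At each critical $\delta$, the multiplicity $\delta-k+1$ equals $k_0$ or $k_0+1$, and in particular $\delta>k$, so $(\delta-1)+(\delta-k+1)>\delta$ and the two-point Bezout reduction of Remark~\ref{bezout} applies. We iterate it, peeling off the line through the two base points corresponding to the largest two multiplicities, and at each step the degree of the system and two multiplicities drop by $1$. After finitely many steps the system reaches the standard form of \cite{DL}, hence is non-special, and its dimension is its expected binomial value. A careful but routine binomial calculation then matches the resulting dimension with $\max\{h_{R/I}(\delta)-h_{R/I}(\delta-2),0\}$, where $h_{R/I}(\delta)$ and $h_{R/I}(\delta-2)$ are themselves computed from the non-special standard-form systems used in the proof of Lemma~\ref{key_lem}.

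The main obstacle is the case-by-case bookkeeping of the iterated Bezout reduction; case (iii) of Lemma~\ref{key_lem} is the most delicate, since both transition degrees of the plateau must be checked and they produce slightly different reduced systems. A secondary difficulty is the socle statement for general $r\geq 5$ that underlies the backward propagation of injectivity, since the clean Gorenstein-linkage argument used for $r=4$ in Lemma~\ref{socle} is no longer available and must be replaced by a direct resolution-theoretic argument using Lemma~\ref{key_lem}.
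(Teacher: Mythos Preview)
Your translation via Emsalem--Iarrobino and the iterated two-point B\'ezout reduction are exactly what the paper does, but you have introduced an unnecessary and genuinely problematic step: the backward propagation of injectivity via a socle bound for general $r\ge 5$. The linkage argument of Lemma~\ref{socle} really does break down for $r\ge 5$ (the residual under a complete-intersection link is no longer Gorenstein), and ``a direct resolution-theoretic argument using Lemma~\ref{key_lem}'' is not a proof: knowing the Hilbert function does not pin down the socle degrees. So as written, your scheme has a gap precisely at the point you flag as a ``secondary difficulty.''

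The paper sidesteps this entirely by using the WLP result of Schenck--Seceleanu. Since $\times L$ has maximal rank in every degree, $\times L^2$ is automatically injective up to the peak (composition of two injections) and automatically surjective from two past the peak (composition of two surjections). Hence only \emph{one} degree needs to be checked directly, namely $\delta=\text{(peak)}+1$, where the two factors of $\times L^2$ go injective-then-surjective. This also inverts your assessment of case~(iii) of Lemma~\ref{key_lem}: when $e=0$ the two consecutive peaks make the WLP composition argument cover \emph{every} degree, so that case is trivial, not the most delicate. In the remaining cases one computes $\dim[R/(I,L^2)]_{rk_0+e}$ by exactly the B\'ezout peeling you describe, reducing to $\cL_2(e;1^r,e-1)$, and compares with $A_1-A_{-1}=2e-(r-1)$; the sign of this difference, not any socle information, is what splits the computation into the two subcases $e\le(r-1)/2$ and $e>(r-1)/2$.
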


\begin{proof} We write $k=(r-1)k_0+e$ with $0\le e\le r-2$ and we distinguish two cases:

\medskip

\noindent \underline{Case 1:} $k_0\ge 1$. We distinguish 3 subcases:

\medskip

1.1.- Assume $e=0$. In this case the result follows from Lemma \ref{key_lem} and the fact that, for any integer $j$, the multiplication map $\times L:[R/I]_{j-1}\longrightarrow [R/J]_j$ has maximal rank.

\medskip

1.2.- Assume $1\le e\le \frac{r-1}{2}$. By Lemma \ref{key_lem}, $R/I$ has exactly one peak, at $rk_0+e-1$ and, moreover, $A_{-1}=\dim[R/I]_{rk_0+e-2} \geq A_{1}=\dim[R/I]_{rk_0+e}$. So, we only need to check that $[R/(I,L^2)]_{rk_0+e}=0$ since this will imply the surjectivity of $\times L^2:[R/I]_{rk_0+e-2}\longrightarrow [R/I]_{rk_0+e}$. We have

\[
\begin{array}{rcll}
\dim \hbox{ coker} (\times L^2)_{rk_0+e} & = & \dim [R/(L_1^{(r-1)k_0+e}, \dots, L_r^{(r-1)k_0+e}, L^2)]_{rk_0+e} & (\hbox{by  (\ref{std exact seq}))}\\
& = & \dim [\wp_1^{k_0+1} \cap \dots \cap \wp_r^{k_0+1} \cap \wp^{rk_0+e-1}]_{rk_0+e} & (\hbox{by Theorem \ref{thm:inverse-system})} \\
& = & \dim \cL_2 (rk_0+e; (k_0+1)^r,rk_0+e-1) \\
& = & \dim \cL_2 (rk_0+e-r;k_0^r,rk_0+e-1-r) & (\hbox{by Remark \ref{bezout})} \\
& = & \cdots & (\hbox{by Remark \ref{bezout})}\\
& = & \dim \cL_2 (e;1^r,e-1) & (\hbox{by Remark \ref{bezout})} \\
& = & 0 & (\hbox{because $2e+1\le r$ )}
\end{array}
\]

1.3.- Assume $\frac{r-1}{2}<e\le r-2$. By Lemma \ref{key_lem}, $R/I$ has exactly one peak at $rk_0+e-1$ and, moreover, $\dim[R/I]_{rk_0+e}-\dim[R/I]_{rk_0+e-2}=A_{1}-A_{-1}=(A_{1}-A_{0})-(A_{-1}-A_{0})=2e-r+1 > 0$. Hence we have to show that $\times L^2$ is injective, with cokernel of dimension $2e-r+1$.  Let us compute $\dim[R/(I,L^2)]_{rk_0+e}$. As above we have
\[
\begin{array}{rcll}
\dim \hbox{ coker} (\times L^2)_{rk_0+e} & = & \dim [R/(L_1^{(r-1)k_0+e}, \dots, L_r^{(r-1)k_0+e}, L^2)]_{rk_0+e} & (\hbox{by  (\ref{std exact seq}))}\\
& = & \dim [\wp_1^{k_0+1} \cap \dots \cap \wp_r^{k_0+1} \cap \wp^{rk_0+e-1}]_{rk_0+e} & (\hbox{by Theorem \ref{thm:inverse-system})} \\
& = & \dim \cL_2 (rk_0+e; (k_0+1)^r,rk_0+e-1) \\
& = & \dim \cL_2 (rk_0+e-r;k_0^r,rk_0+e-1-r) & (\hbox{by Remark \ref{bezout})} \\
& = & \cdots & (\hbox{by Remark \ref{bezout})}\\
& = & \dim \cL_2 (e;1^r,e-1) & (\hbox{by Remark \ref{bezout})} \\
& = & {e+2\choose 2}-{e\choose 2}-r \\
& = & 2e-r+1
\end{array}
\]
as expected.

\vskip 2mm
\underline{Case 2:} $k_0=0$. In this case we have $k=e$. It immediately follows from the equalities
$$
\dim[R/(I,L^2)]_k=\dim \cL_2 (k;1^r,k-1)= \max \left \{ 0,{k+2\choose 2}-{k\choose 2}-r \right \}
$$
and
$$
\dim[R/I]_k - \dim[R/I]_{k-2} = {k+2\choose 2} - r - {k\choose 2}.
$$
\end{proof}

Experimentally it seems that an even more general result is true, namely to remove the assumption of uniform powers, but we have not been able to prove it apart from the case of almost complete intersections mentioned earlier:

\begin{conj}
{\em For any artinian quotient of $k[x,y,z]$ generated by powers of general linear forms, and for a general linear form $L$, multiplication by $L^2$ has maximal rank in all degrees.}
\end{conj}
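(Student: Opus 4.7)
The plan is to adapt the approach used for Theorem~\ref{L^2} in the uniform setting. Let $I = (L_1^{a_1},\dots,L_r^{a_r})$ with the $a_i$ arbitrary positive integers and $r \geq 5$ (the cases $r \leq 4$ being already settled by Stanley--Watanabe and by \cite{MMN-lin forms}). By the Schenck--Seceleanu theorem $R/I$ has WLP, so its Hilbert function is unimodal by \cite{HMNW} Remark 3.3; let $p$ (or $p, p+1$) denote its peak(s). One would first establish an analogue of Lemma~\ref{key_lem} locating $p$ explicitly in terms of $a_1,\dots,a_r$, and check which side of the peak dominates in each comparison $h_{R/I}(m) $ vs. $h_{R/I}(m-2)$.

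Next, apply Lemma~\ref{tool} to reduce surjectivity of $\times L^2:[R/I]_{m-2}\to [R/I]_m$ to a single critical degree $b$ near $p+2$, and apply the analogous argument on the canonical module of $R/I$ (using the Cohen--Macaulay type / socle structure, which would need to be controlled in the mixed case) to reduce injectivity to a critical degree $a$ near $p$. This gives a finite list of degrees in which $\times L^2$ needs to be checked, exactly as in Corollary~\ref{two degrees} for the uniform almost-complete-intersection case.

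The core step is then the computation of $\dim[R/(I,L^2)]_m$ at each critical degree. By Theorem~\ref{thm:inverse-system} this equals the dimension of the linear system
\[
\cL_2\bigl(m;\, m-a_1+1, m-a_2+1,\dots, m-a_r+1,\, m-1\bigr)
\]
on $r+1$ general points in $\mathbb{P}^2$. One would iterate the B\'ezout reductions of Remark~\ref{bezout}, peeling off a conic through the five points of largest multiplicity each time, to bring the system to one whose dimension can be read off from a standard-form linear system via Lemma~\ref{lem:Cremona}. Comparing the resulting value to $\max\{0,\ \dim[R/I]_m - \dim[R/I]_{m-2}\}$ then closes the argument through the basic exact sequence (\ref{eq:max-rank-I}).

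The main obstacle is precisely this last step in the mixed setting. In the uniform case treated in Theorem~\ref{L^2}, all $r$ of the multiplicities $(k_0+1)^r$ are equal, so B\'ezout reduces the system by a fixed amount at every stage and terminates predictably at $\cL_2(e;1^r,e-1)$, whose dimension is directly computable. With unequal $a_i$'s the multiplicities $m-a_i+1$ may differ widely, so the five largest ones shift from stage to stage; one must understand when the reduction process enters standard form, and one has to rule out any residual speciality of the reduced system (which is safe in $\mathbb{P}^2$ once standard form is reached, but not before). A secondary difficulty is to prove the appropriate generalization of Lemma~\ref{socle}, since the socle and canonical-module generator degrees are needed in order to bound the critical range via Lemma~\ref{tool}; without the uniform power hypothesis the liaison argument of Lemma~\ref{socle} no longer applies, and one may instead need to induct on $r$ or on $\sum a_i$, e.g.\ by removing the generator of smallest exponent and comparing Hilbert functions.
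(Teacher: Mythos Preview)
The statement you are attempting to prove is labeled a \emph{Conjecture} in the paper, and the paper gives no proof of it. Immediately before stating it, the authors write that ``we have not been able to prove it apart from the case of almost complete intersections mentioned earlier.'' So there is no proof in the paper to compare your proposal against.

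Your proposal is not a proof either, and you are candid about this: you correctly isolate the two genuine obstructions. First, the B\'ezout/Cremona reduction in the proof of Theorem~\ref{L^2} relies on all $r$ multiplicities being equal, so that each step subtracts a fixed amount and the process terminates at the computable system $\cL_2(e;1^r,e-1)$; with mixed exponents the reduction is uncontrolled and you have no mechanism to guarantee arrival at standard form, nor to compute the terminal dimension. Second, the reduction to finitely many critical degrees via Corollary~\ref{two degrees} rests on Lemma~\ref{socle}, whose liaison proof uses that the first three generators form a complete intersection of the \emph{same} degree as the fourth; for mixed $a_i$ and $r\ge 5$ no analogue is available, and your suggested induction on $r$ or $\sum a_i$ is only a hope, not an argument. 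Until both of these are resolved, the outline does not constitute a proof, which is consistent with the paper's own assessment that the statement remains open.
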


%%%%%%%%%%%%%%%%%%%%%%%%%%%%%%%%%%%%%%%%%%%%%%%%%%%

\section{Multiplication by $L^j$ for $3 \leq j \leq 5$}  \label{section higher j}

Let $L \in k[x,y,z]$ be a general linear form. As noted in the introduction, the main result of \cite{SS} shows that for any ideal generated by powers of linear forms, $\times L$ has maximal rank in all degrees. As we look to multiplication by successively larger powers of $L$, we will see that the maximal rank property in all degrees quickly erodes away. For uniform powers it is already known that if the linear forms are not general then $\times L^2$ does not necessarily always have maximal rank (\cite{DIV}, \cite{CHMN}). On the other hand, for ideals generated by arbitrary powers of four general linear forms (\cite{MMN-lin forms}) and for ideals generated by uniform powers of any number of general linear forms (Theorem \ref{L^2}),  $\times L^2$ does have maximal rank .

In this section we study what happens for ideals of uniform powers of four general linear forms under multiplication by $L^3, L^4$ and $L^5$. The problem is trivial for $k \leq 2$, so we assume $k \geq 3$. In Theorem \ref{L5} we assume $k \geq 4$ because the socle degree is too small when $k=3$; maximal rank holds trivially in all degrees in this case.

In this section we prove our main results, which we separate into the following theorems. A good part of the proofs will be merged using the set-up from section \ref{preparation section}.

\begin{thm} {\em (Multiplication by $L^3$.)} \label{L3}
Let $I = (L_1^k,\dots,L_4^k)$, where $L_1,\dots,L_4$ are general linear forms and $k \geq 3$.

\begin{itemize}
\item[(i)] If $k \cong 0 \hbox{\rm \ (mod } 3)$, set $k = 3 k_0$. Then for $\delta = 4k_0$ we have  $\dim [R/I]_{\delta-3} = \dim [R/I]_\delta$, and $\times L^3$ fails by exactly one to be an isomorphism between these components. In all other degrees, $\times L^3$ has maximal rank.

\medskip

\item[(ii)] If $k \not \cong 0 \hbox{\rm \ (mod } 3)$ then $\times L^3$ has maximal rank in all degrees.

\end{itemize}
\end{thm}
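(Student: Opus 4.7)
The plan is to apply Corollary \ref{two degrees} with the critical degrees $a,b$ read off from Remark \ref{values of a,b} specialized to $j=3$: writing $k = 3k_0 + e$ with $0 \le e \le 2$, one has $(a,b) = (4k_0, 4k_0)$, $(4k_0+1, 4k_0+2)$, $(4k_0+2, 4k_0+3)$ in the three residue classes. From (\ref{critical values 2}) the target cokernel dimensions required for maximal rank are $0$ at $\delta = b$ (surjectivity) and $C_2-C_1 \in \{0,3,6\}$ at $\delta = a$ (injectivity). By the exact sequence (\ref{std exact seq}) and Theorem \ref{thm:inverse-system}, each such cokernel equals $\dim \cL_2(\delta;\, (\delta-k+1)^4,\, \delta-2)$, so the theorem reduces to five concrete linear-system dimension computations.

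For case (i) I would iterate Remark \ref{bezout} twice on $\cL_2(4k_0; (k_0+1)^4, 4k_0-2)$ to reach $\cL_2(4k_0-4;\, 4k_0-4,\, (k_0-1)^4)$. Since the multiplicity at the fifth point $P$ now equals the degree, elements of this reduced system are binary forms in $x,y$ of degree $4k_0-4$, and the four multiplicity-$(k_0-1)$ conditions at the remaining general points force divisibility by $\prod_{i=1}^{4}(b_i x - a_i y)^{k_0-1}$, a product of total degree exactly $4k_0-4$. Hence the dimension is $1$, yielding cokernel $1$ at $\delta = 4k_0$; combined with the equality $C_1 = C_2$ there, this is precisely the ``fails by one'' conclusion. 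For the $b$-degree systems in case (ii), analogous iterated Bézout reductions terminate in a system with $b_1 > j$, so the dimension is zero, giving the required surjectivity.

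The most delicate step is the injectivity half of case (ii): the relevant systems $\cL_2(4k_0+1; (k_0+1)^4, 4k_0-1)$ and $\cL_2(4k_0+2; (k_0+1)^4, 4k_0)$ can be \emph{special}, with negative expected dimension but actual dimension $3$ or $6$. For small $k_0$ a combination of Bézout and Cremona (Lemma \ref{lem:Cremona}) reduces them to standard-form systems of the right dimension. For larger $k_0$ I would argue structurally using Lemma \ref{res of ci}: the CI resolution makes $[I_X^{k_0+1}]_\delta$ explicit via the generators $C_1^a C_2^b$ (with $C_1, C_2$ two conics through the four CI points), and analyzing the evaluation map to $[R/\wp_P^{\delta-2}]_\delta$ at the fifth point pins down the kernel dimension.

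Finally, in case (i) the equality $a=b$ means Corollary \ref{two degrees} does not apply verbatim, so I would propagate maximal rank to the remaining degrees by repeating its argument at the strict surrounding values $a' = 4k_0-1$ and $b' = 4k_0+1$: surjectivity at $b'$ follows from another Bézout chain, and injectivity at $a'$ follows by duality on the canonical module, whose generation in at most two degrees (Lemma \ref{socle}) lets Lemma \ref{tool} propagate the dual surjectivity. The small cases $k=3,4,5$, where $\delta < j+k-1$ and Case 2 of Remark \ref{bad delta} intervenes, are handled by direct Hilbert function computation. The main obstacle is the special-dimension injectivity computation in case (ii) for larger $k_0$, where naive reductions give the wrong expected dimension and one must exploit the CI structure of the four base points.
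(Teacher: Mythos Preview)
Your overall strategy matches the paper's: reduce via Corollary~\ref{two degrees} to the two critical degrees, translate cokernels into linear-system dimensions by Theorem~\ref{thm:inverse-system}, and compute these via B\'ezout/Cremona reductions. Your treatment of the failure in case (i) at $\delta = 4k_0$ (iterated B\'ezout down to a system where the degree equals the multiplicity at $P$, then the product-of-lines $(\ell_1\ell_2\ell_3\ell_4)^{k_0-1}$ argument) is essentially the paper's.

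Two points deserve correction. First, in case (i) you must still \emph{prove} injectivity at $a' = 4k_0-1$ by an explicit cokernel computation; the duality/Lemma~\ref{tool} machinery only propagates injectivity downward once it is established at some degree, it does not manufacture it. The paper carries out this step concretely: starting from $\cL_2(4k_0-1;\,k_0^4,\,4k_0-3)$, it iterates the four line-splits through $\overline{P_iP}$ (Remark~\ref{bezout}) exactly $k_0-2$ times to reach the fixed system $\cL_2(7;\,2^4,\,5)$, and then two applications of Lemma~\ref{lem:Cremona} give dimension $9 = C_2-C_1$. Your sketch omits this computation.

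Second, your concern that B\'ezout$+$Cremona handles only small $k_0$ in case (ii) is unfounded, and the alternative CI-resolution/evaluation-map route you propose for large $k_0$ is both unnecessary and harder to finish: the evaluation map $[I_X^{k_0+1}]_{4k_0+1}\to[R/\wp^{4k_0-1}]_{4k_0+1}$ has corank $2(k_0-1)(k_0-2)$, so extracting its kernel dimension is no more elementary than the original problem. The paper's method (detailed for (i), declared to work verbatim for (ii)) is uniform in $k_0$: for $k=3k_0+1$ and $\delta=a=4k_0+1$, iterating the four line-splits $k_0-1$ times collapses $\cL_2(4k_0+1;\,(k_0+1)^4,\,4k_0-1)$ to the \emph{fixed} system $\cL_2(5;\,2^4,\,3)$, which Cremona shows has dimension $3=C_2-C_1$; the case $k=3k_0+2$ is analogous. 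The ``special for large $k_0$'' phenomenon you flagged is real at the level of expected dimensions, but the repeated line-splitting dissolves it completely, so no separate structural argument is needed.
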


\begin{thm} {\em (Multiplication by $L^4$.)} \label{L4}
Let $I = (L_1^k,\dots,L_4^k)$, where $L_1,\dots,L_4$ are general linear forms and $k \geq 3$.

\begin{itemize}
\item[(i)] If $k \cong 0 \hbox{\rm \ (mod } 3)$, then $\times L^4$ has maximal rank in all degrees.

\medskip

\item[(ii)] If $k  \cong 1 \hbox{\rm \ (mod } 3)$, set $k = 3k_0 +1$. Then $\times L^4$ fails surjectivity by 1 from degree $4k_0-2$ to degree $4k_0+2$. In all other degrees $\times L^4$ has maximal rank.

\medskip

\item[(iii)] If $k \cong 2 \hbox{\rm \ (mod } 3)$, set $k = 3k_0 +2$. Then  $\times L^4$ fails injectivity by 1 from degree $4k_0-1$ to degree $4k_0+3$.  In all other degrees, $\times L^4$ has maximal rank.

\end{itemize}
\end{thm}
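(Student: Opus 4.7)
The plan is to reduce, via Corollary \ref{two degrees} and Remark \ref{values of a,b}, the verification of maximal rank to a handful of explicit dimension computations and then evaluate each such dimension by combining Bézout (Remark \ref{bezout}), Cremona (Lemma \ref{lem:Cremona}), and a divisibility argument. From Remark \ref{values of a,b}, case~(i) requires checking injectivity at $a=4k_0$ and surjectivity at $b=4k_0+1$; case~(ii) requires checking injectivity at $a=4k_0+1$, the asserted failure of surjectivity at $b=4k_0+2$, and surjectivity at $b+1=4k_0+3$ (so that Lemma \ref{tool} propagates surjectivity to every $\delta\geq b+1$, pinning down the unique bad degree); case~(iii) is symmetric, asking for injectivity at $a-1=4k_0+2$, the asserted failure of injectivity at $a=4k_0+3$, and surjectivity at $b=4k_0+4$. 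In each instance, the exact sequence \eqref{std exact seq} together with Theorem \ref{thm:inverse-system} identifies
\[
\dim[R/(I,L^4)]_\delta \;=\; \dim\cL_2\bigl(\delta;(\delta-k+1)^4,\delta-3\bigr),
\]
which must be compared with $\max\{0,h_{R/I}(\delta)-h_{R/I}(\delta-4)\}$ using the formulas of Remark \ref{values of a,b} (with the Case~2 correction of Remark \ref{bad delta} whenever $\delta<j+k-1$).

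The computational engine is the following divisibility principle. Let $M_i$ denote the line through $P_i$ and $P_5$. Any $F\in\cL_2(d;m^4,n)$ with $m+n>d$ restricts to a form of degree $d$ on $M_i\cong\mathbb{P}^1$ vanishing to total order $\geq m+n>d$, hence $F|_{M_i}\equiv 0$ and $M_i\mid F$; as the four lines are generically distinct, $M_1M_2M_3M_4\mid F$, and one obtains the isomorphism
\[
\cL_2(d;m^4,n)\;\cong\;\cL_2\bigl(d-4;(m-1)^4,n-4\bigr).
\]
The routine is therefore: apply Remark \ref{bezout} as long as its hypothesis $2j<b_1+\cdots+b_5$ holds; once it fails, iterate the divisibility isomorphism until the inequality $m+n>d$ breaks; what remains is a small base system that Lemma \ref{lem:Cremona} puts in standard form.

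Executing this case by case: in case~(i) the chain at $\delta=a$ terminates at $\cL_2(6;2^4,4)$, whose Cremona transform is $\cL_2(4;2^3)$ of dimension $6$, matching $h(a)-h(a-4)=6$; the chain at $b$ terminates at a zero-dimensional system. In case~(ii) the chain at $\delta=b$ collapses to $\cL_2(4t;t^4,4t)$ with $t=k_0-1$ and, by further divisibility, to $\cL_2(4;1^4,4)=\langle M_1M_2M_3M_4\rangle$ of dimension exactly $1$, producing the failure by exactly one; the chain at $a$ reduces to $\cL_2(9;3^4,6)$, Cremona-equivalent to the space of cubics and of dimension $10$, so injectivity holds; the chain at $b+1$ terminates at a zero-dimensional system. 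Case~(iii) is structurally parallel, with the crucial base system being $\cL_2(3;1^4,2)$, whose Cremona transform $\cL_2(2;1^3)$ has dimension $3$, producing the predicted one-dimensional kernel at $\delta=a$.

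The main obstacle is ensuring that the divisibility inequality $m+n>d$ persists throughout the iteration: the extra unit of dimension at the failure degrees in cases~(ii) and~(iii) is precisely the geometric obstruction that survives when divisibility becomes an equality $m+n=d$ in the last step and cannot be pushed further. This accounting is delicate but uniform once $k_0$ is large enough. For $k_0\leq 2$ (and $k_0=1$ in case~(i)) the critical degree $\delta$ drops below $j+k-1$, so the relevant $C_1$ must be recomputed via Case~2 of the preparation section (cf.\ Remark \ref{bad delta}); these finitely many configurations are verified by direct computation.
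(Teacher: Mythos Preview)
Your proposal is correct and follows essentially the same approach as the paper: reduce to the critical degrees $a$, $b$ (and $a-1$, $b+1$) via Corollary~\ref{two degrees}, then compute $\dim[R/(I,L^4)]_\delta$ by inverse systems, B\'ezout peeling, and Cremona. Your ``divisibility principle'' is exactly what the paper calls ``splitting off four lines'' (i.e.\ four applications of the line case of Remark~\ref{bezout}), and your base cases $\cL_2(6;2^4,4)$, $\cL_2(9;3^4,6)$, $\cL_2(4;1^4,4)$, $\cL_2(3;1^4,2)$ are the correct endpoints; since the paper proves only the $j=3,5$ cases in detail and leaves $j=4$ to the reader, you have effectively supplied that omitted computation.
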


\begin{thm} \label{L5}  {\em (Multiplication by $L^5$.)}
Let $I = (L_1^k,\dots,L_4^k)$, where $L_1,\dots,L_4$ are general linear forms and $k \geq 4$.

\begin{itemize}
\item[(i)] If $k \cong 0 \hbox{\rm \ (mod } 3)$, set $k = 3k_0$. Then $\dim [R/I]_{4k_0-4} = \dim [R/I]_{4k_0+1}$ and $\times L^5$ fails by 3 to be an isomorphism. In all other degrees, $\times L^5$ has maximal rank.

\medskip

\item[(ii)] If $k  \cong 1 \hbox{\rm \ (mod } 3)$, set $k = 3k_0 +1$. Then $\times L^5$ fails  injectivity by 1 from degree $4k_0-3$ to degree $4k_0+2$.
In all other degrees, $\times L^5$ has maximal rank.

\medskip

\item[(iii)] If $k \cong 2 \hbox{\rm \ (mod } 3)$, set $k = 3k_0 +2$. Then $\times L^5$ fails  surjectivity by 1 from degree $4k_0-1$ to degree $4k_0+4$. In all other degrees, $\times L^5$ has maximal rank.

\end{itemize}
\end{thm}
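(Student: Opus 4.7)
The plan is to apply Corollary \ref{two degrees} with $j=5$, reducing the maximal rank question to verifying two specific degrees. Writing $k = 3k_0 + \varepsilon$ with $\varepsilon \in \{0,1,2\}$, Remark \ref{values of a,b} (with $j=5$ odd) gives $a = b = 4k_0+1$ when $\varepsilon=0$; $a=4k_0+2$, $b=4k_0+3$ when $\varepsilon=1$; and $a=4k_0+3$, $b=4k_0+4$ when $\varepsilon=2$. Formula (\ref{critical values}) specializes to $C_1-C_2 = 15\delta - 20k - 15$, which determines $\max\{0, C_2-C_1\}$ in each case; via the exact sequence (\ref{std exact seq}), the failure magnitude in degree $\delta$ is exactly $\dim [R/(I,L^5)]_\delta - \max\{0, C_2-C_1\}$, so the claim reduces to computing $\dim [R/(I,L^5)]_\delta$ at the values of $\delta$ listed above.

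For each such $\delta$, Theorem \ref{thm:inverse-system} converts the computation to
\[
\dim [R/(I,L^5)]_\delta = \dim \cL_2(\delta; (\delta-k+1)^4, \delta-4).
\]
Since $P_5$ has very large multiplicity compared to $\delta$, the inequality $2\delta < 4(\delta-k+1)+(\delta-4)$ holds in the relevant range, and we can strip multiplicities off by iterated B\'ezout (Remark \ref{bezout}): each step subtracts $2$ from the degree and $1$ from the five multiplicities. After the maximal number of B\'ezout reductions, we apply a Cremona move (Lemma \ref{lem:Cremona}) on the large fat point plus two of the $P_i$; the residual system is either in standard form (so its actual dimension equals the expected one and can be read off directly) or admits a further B\'ezout or Cremona step. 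Carrying this out for the $a$ and $b$ of each case $\varepsilon = 1, 2$ yields the values $\dim [R/(I,L^5)]_{4k_0+2} = 6$, $\dim [R/(I,L^5)]_{4k_0+3} = 0$ for $\varepsilon = 1$ and $\dim [R/(I,L^5)]_{4k_0+3} = 10$, $\dim [R/(I,L^5)]_{4k_0+4} = 1$ for $\varepsilon = 2$, which translate via (\ref{std exact seq}) into failure of injectivity by $1$ (respectively, surjectivity by $1$), as stated.

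The main obstacle will be case (i), where $a = b = 4k_0+1$ forces $C_1 = C_2$ and we must show the system $\cL_2(4k_0+1; (k_0+2)^4, 4k_0-3)$ is special with actual dimension $3$ (its expected dimension is at most $0$). Here B\'ezout reductions terminate with a system in which Cremona is blocked for $k_0 \geq 3$, so a purely numerical reduction is not enough. My approach will be to use the complete intersection structure of $X = \{P_1,\dots,P_4\}$: by Lemma \ref{res of ci} the resolution of $I_X^{k_0+2}$ yields $\dim [I_X^{k_0+2}]_{4k_0+1} = (k_0+3)\binom{2k_0-1}{2} - (k_0+2)\binom{2k_0-3}{2} = 6k_0^2 - 9$, and we then analyze why the $\binom{4k_0-2}{2}$ conditions imposed by $\wp_5^{4k_0-3}$ fail to be independent on this space, exhibiting an explicit $3$-dimensional family of forms (coming from multiplying a basis of low-degree elements of $I_X^{k_0+2}$ by the $(4k_0-3)$-th power of a line through $P_5$) that vanishes at $P_5$ with the required order. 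Finally, the Case-1 formulas of Remark \ref{values of a,b} require $\delta \geq j + k - 1 = k + 4$, which forces $k_0 \geq 2$ in cases (i) and (iii) and $k_0 \geq 3$ in case (ii); the remaining base values ($k \in \{4,5,6,7\}$) will be checked directly from the Hilbert function computation of Section \ref{preparation section}, Case 2, or by hand (e.g., Macaulay2), completing the proof.
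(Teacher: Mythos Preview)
Your overall strategy matches the paper's: use Corollary~\ref{two degrees}, identify $a,b$ via Remark~\ref{values of a,b}, and compute $\dim[R/(I,L^5)]_\delta$ by inverse systems and B\'ezout/Cremona reductions. However, there is a genuine gap in case~(i). You assert that after B\'ezout reductions the system $\cL_2(4k_0+1;(k_0+2)^4,4k_0-3)$ lands in a configuration where Cremona is blocked and ``a purely numerical reduction is not enough.'' This is incorrect: you have only used the \emph{conic} half of Remark~\ref{bezout}. After three conic strips one reaches $\cL_2(4k_0-5;(k_0-1)^4,4k_0-6)$, and now the \emph{line} half applies, since $4k_0-5<(k_0-1)+(4k_0-6)$ for $k_0\ge 3$. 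Stripping the four lines $\overline{P_5P_i}$ and iterating reduces all the way to $\cL_2(3;1^4,2)$, which has dimension $3$ by direct count. This is exactly how the paper proceeds, so the numerical reduction does suffice. Your proposed alternative---multiplying low-degree elements of $I_X^{k_0+2}$ by $\ell^{4k_0-3}$ for a line $\ell$ through $P_5$---does not produce forms of the correct degree (the initial degree of $I_X^{k_0+2}$ is $2k_0+4$, giving total degree $6k_0+1$, not $4k_0+1$), and in any case would only yield a lower bound.

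There is a second, smaller gap. When $\times L^5$ \emph{fails} maximal rank at $a$ (case~(ii)) or at $b$ (case~(iii)), Corollary~\ref{two degrees} no longer closes the argument by checking only $a$ and $b$: you must separately verify injectivity at $a-1$ (case~(ii)) and surjectivity at $b+1$ (case~(iii)), and similarly both $a-1$ and $b+1$ in case~(i). The paper does exactly this (invoking the proof of Corollary~\ref{two degrees} rather than its statement), and the same B\'ezout/Cremona machinery handles these extra degrees; but your proposal omits them.
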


The arguments for all of the cases of Theorems \ref{L3}, \ref{L4} and \ref{L5} are more or less the same, using Theorem \ref{thm:inverse-system}, Lemma \ref{lem:Cremona}, Remark \ref{bezout} and Lemma \ref{res of ci}. We will carefully explain one case, and leave the rest to the reader.

\begin{rem}
The multiplication by $L^j$ is reflected in the exact sequence (\ref{std exact seq}).  We have four scenarios.

\medskip

\begin{itemize}
\item To prove that $\times L^j : [R/I]_{\delta-j} \rightarrow [R/I]_\delta$ is surjective, we  have to prove that $\dim [R/(I,L^j)]_\delta~=~0$.

\medskip

\item To prove that $\times L^j : [R/I]_{\delta-j} \rightarrow [R/I]_\delta$ fails surjectivity by $\epsilon$, we have to prove that we expect surjectivity (i.e. that $C_1 - C_2 \geq 0$ in Remark \ref{find a b}) and that $\dim [R/(I,L^j)]_\delta~=~\epsilon$.

\medskip

\item To prove that $\times L^j : [R/I]_{\delta-j} \rightarrow [R/I]_\delta$ is injective, we  have to prove that
$\dim [R/(I,L^j)]_\delta = -(C_1-C_2)$.

\medskip

\item To prove that $\times L^j : [R/I]_{\delta-j} \rightarrow [R/I]_\delta$ fails injectivity by $\epsilon$, we have to prove that we expect injectivity (i.e. that $C_1 - C_2 \leq 0$ in Remark \ref{find a b}) and that $\dim [R/(I,L^j)]_\delta = -(C_1 - C_2) + \epsilon$.
\end{itemize}

\medskip

\noindent The issue of the value of $C_1-C_2$ was discussed at the end of section \ref{preparation section}. The mere fact that $\dim [R/(I,L^j)]_\delta > 0$ does not tell us which case we are in.

\medskip

\end{rem}

With minor differences, the proofs of all parts of these theorems follow the same lines. We will prove Theorem \ref{L3} (i) and Theorem \ref{L5} (i) here. ``Low" values of $k$ have to be dealt with separately, in keeping with Remark \ref{bad delta}.

Assume that $k = 3k_0$ and that $j$ is odd. Then by inspection for $k = 3,6,9$ and by Remark \ref{values of a,b} for $k \geq 12$, we have
\[
a = b =  4k_0 + \frac{j-1}{2}-1  = 4k_0 + \frac{j-3}{2}
\]
and when $\delta = a$ we get
\[
 C_1-C_2 = 0
 \]
 for both $j=3$ and $j=5$. This proves the equality of the dimensions, asserted in Theorem \ref{L3} (i) and Theorem \ref{L5} (i). We compute
\[
\begin{array}{rcll}
\dim \hbox{ coker} (\times L^j)_{4k_0+\frac{j-3}{2}} & = & \dim [R/(L_1^{3k_0}, \dots, L_4^{3k_0}, L^j)]_{4k_0+\frac{j-3}{2}} & (\hbox{by  (\ref{std exact seq}))}\\
& = & \dim [\wp_1^{k_0+\frac{j-1}{2}} \cap \dots \cap \wp_4^{k_0+\frac{j-1}{2}} \cap \wp^{4k_0- \frac{j+1}{2}}]_{4k_0+\frac{j-3}{2}} & (\hbox{by Theorem \ref{thm:inverse-system})} \\
& = & \dim [\wp_1^{k_0+\frac{j-1}{2}-1} \cap \dots \cap \wp_4^{k_0+\frac{j-1}{2}-1} \cap \wp^{4k_0-\frac{j+3}{2}}]_{4k_0+\frac{j-7}{2}} & (\hbox{by Remark \ref{bezout})}
\end{array}
\]
 If $j=3$, this is
\[
\begin{array}{rcll}
 \dim [\wp_1^{k_0} \cap \dots \cap \wp_4^{k_0} \cap \wp^{4k_0-3}]_{4k_0 -2} & = & \dim [\wp_1^{k_0-1} \cap \dots \cap \wp_4^{k_0-1} \cap \wp^{4k_0-4}]_{4k_0 -4} & (\hbox{by Remark \ref{bezout})} .
\end{array}
\]
If we denote by $\ell_i$ the equation of the line joining the point corresponding to $\wp_i$ to the point corresponding to $\wp$, then $(\ell_1 \ell_2 \ell_3 \ell_4)^{k_0-1}$ defines the unique non-zero element (up to scalar multiplication) in this vector space, so this dimension is 1 as claimed, proving the failure of isomorphism claimed in Theorem \ref{L3} (i).

Now assume $j=5$. Since $k\geq 4$ (else the socle degree is too small for $\times L^5$ to be non-trivial), we have $k_0 \geq 2$ and
\[
\begin{array}{rcll}
\dim [\wp_1^{k_0+1} \cap \dots \cap \wp_4^{k_0+1} \cap \wp^{4k_0-4}]_{4k_0 -1} & = & \dim [\wp_1^{k_0} \cap \dots \cap \wp_4^{k_0} \cap \wp^{4k_0-5}]_{4k_0 -3} & (\hbox{by Remark \ref{bezout})} \\
& = & \dim [\wp_1^{k_0-1} \cap \dots \cap \wp_4^{k_0-1} \cap \wp^{4k_0-6}]_{4k_0 -5} & (\hbox{by Remark \ref{bezout})}
\end{array}
\]
If $k_0=2$ this is easily computed to be $3$, as claimed. If $k_0 \geq 3$, we use the other part of Remark \ref{bezout} to obtain
\[
\dim [\wp_1^{k_0-2} \cap \dots \cap \wp_4^{k_0-2} \cap \wp^{4k_0-10}]_{4k_0-9}.
\]
We can continue to apply this remark until we obtain
\[
\dim [\wp_1 \cap \dots \wp_4 \cap \wp^2]_3 = 3
\]
as desired. This proves the failure of isomorphism claimed in Theorem \ref{L5} (i).

To finish Theorem \ref{L3} (i) and Theorem \ref{L5}(i), we have to prove surjectivity when $\delta \geq b+1$ and injectivity for $\delta \leq a-1$. The proof of Corollary \ref{two degrees} shows that it is enough to prove surjectivity for $\delta = b+1$ and injectivity for $\delta = a-1$.

Note that if replace $b$ by $b+1$ in the calculations above, then the same argument will yield
 \[
 \dim [\wp_1^{k_0+\frac{j-1}{2}} \cap \dots \cap \wp_4^{k_0+\frac{j-1}{2}} \cap \wp^{4k_0-\frac{j+3}{2}+1}]_{4k_0+\frac{j-7}{2}+1}
 \]
 which is
 \[
 \left \{
 \begin{array}{ll}
 \dim [\wp_1^{k_0+1} \cap \dots \cap \wp_4^{k_0+1} \cap \wp^{4k_0 -2}]_{4k_0-1} & \hbox{if } j=3 \\
 \dim [\wp_1^{k_0+2} \cap \dots \cap \wp_4^{k_0+2} \cap \wp^{4k_0 -3}]_{4k_0} & \hbox{if } j=5
 \end{array}
 \right.
 \]
 which reduces to
  \[
 \left \{
 \begin{array}{ll}
 \dim [\wp_1^{k_0} \cap \dots \cap \wp_4^{k_0} \cap \wp^{4k_0 -3}]_{4k_0-3} & \hbox{if } j=3 \\
 \dim [\wp_1^{k_0-1} \cap \dots \cap \wp_4^{k_0-1} \cap \wp^{4k_0 -6}]_{4k_0-6} & \hbox{if } j=5
 \end{array}
 \right.
 \]
 and these are both clearly 0, since for a curve of degree $d$ to have a singularity of degree $d$ at a point $p$, it must be a union of lines through $p$ (up to multiplicity), and in both cases such a union of lines cannot account for the remaining singularities. This gives our surjectivity.

 Now let $\delta = a-1$. We want to show
 \[
 \dim [R/I]_{a-1} -\dim [R/I]_{a-1-j} = \dim [R/(I,L^j)]_{a-1}.
 \]
 For $j=3$, we just have to show the result for $k=3$ and $k=6$ separately since we can use Remark \ref{values of a,b} (adjusted so that $a-1 \geq j+k-1$) for larger $k$. For $j=5$, similarly we only have to show the cases $k = 3,6,9$ separately. These are tedious but easy calculations using the above methods (or on a computer). So we can also compute the case $k_0=3$ when $j=3$, and now assume $k_0 \geq 4$ for both values of $j$.

So instead assume that  $a-1 = 4k_0 + \frac{j-1}{2} -2$ and let $C_1$ and $C_2$ be the values obtained from setting $\delta = a-1$. We get from (\ref{critical values}) that
 \[
 C_1 - C_2 = 3j \left (4k_0 + \frac{j-1}{2} -2 \right ) - 4 (3k_0)j - 3 \binom{j-1}{2} +3 = -3j.
 \]
 Now we compute
 \[
 \begin{array}{lllll}
 \dim [R/(I,L^j)]_{a-1} & =  & \dim [\wp_1^{a-k} \cap \dots \cap \wp_4^{a-k} \cap \wp^{a-j}]_{a-1} \\
 & = & \dim[\wp_1^{k_0 + \frac{j-1}{2}-1} \cap \dots \cap \wp_4^{k_0+\frac{j-1}{2}-1} \cap \wp^{4k_0 - \frac{j+1}{2}-1}]_{4k_0 + \frac{j-1}{2}-2}.
 \end{array}
 \]
 As long as $k_0 > \frac{j-1}{2}+1$,  which is true in our case, we can split off four lines.

First assume $j=3$. We have
 \[
 \dim [\wp_1^{k_0} \cap \dots \cap \wp_4^{k_0} \cap \wp^{4k_0-3}]_{4k_0-1}.
 \]
 We continue to split off four lines at a time, $n$ times. We can do this as long as
 \[
 (k_0 -n) + (4k_0 -3-4n) > 4k_0 - 1-4n
 \]
 i.e. we can do this $k_0-2$ times. We reduce to
 \[
 \dim [\wp_1^2 \cap \dots \cap \wp_4^2 \cap \wp^5]_7.
 \]
Applying Lemma \ref{lem:Cremona} twice, we see this is equal to $9 = 3j$, as desired.

When $j=5$, we have
 \[
 \dim [\wp_1^{k_0+1} \cap \dots \cap \wp_4^{k_0+1} \cap \wp^{4k_0-4}]_{4k_0}.
 \]
We can split off four lines at a time, arriving at
 \[
 \dim [\wp_1^4 \cap \dots \cap \wp_4^4 \cap \wp^8]_{12}.
 \]
Again applying Lemma \ref{lem:Cremona} twice, we obtain $15 = 3j$ as desired.

\begin{rem}\label{geq 6}
For $\times L^j$ with $j \geq 6$, we have checked on \cite{cocoa} that failure of maximal rank occurs in more than one degree for all sufficiently large values of $k$. Of course this can be confirmed with our methods.
\end{rem}

\section{Final Remarks}

As a nice application of our approach to analyze whether ideals generated by powers of linear forms have SLP we have   the following result.

\begin{pro} \label{socledegree}
Let $L_1,\cdots ,L_5\in R$ be general linear forms and $k\ge 3$. Consider the ideals $I=(L_1^k,\cdots ,L_4^k)$ and $J=(L_1^k,\cdots ,L_4^k,L_5^k)$. Then, $R/I$ and $R/J$ have the same socle degree, namely, it is $2k-2$.
\end{pro}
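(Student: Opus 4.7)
The plan is to use the containment $I \subset J$ together with a very explicit geometric witness in the top degree.

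First, as already established in Section \ref{preparation section}, the socle degree of $R/I$ is $2k-2$. Since $J \supset I$, the quotient $R/J$ is a further quotient of $R/I$, so $R/J$ is artinian and its socle degree is at most $2k-2$. Hence it suffices to exhibit a nonzero element in $[R/J]_{2k-2}$.

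To do this, I will apply Theorem \ref{thm:inverse-system} to $J$: for $j = 2k-2 \geq k$, we have
\[
\dim_k [R/J]_{2k-2} \;=\; \dim_k \cL_2\bigl(2k-2;\,(k-1)^5\bigr),
\]
where $\wp_1,\dots,\wp_5$ are the ideals of the five general points in $\mathbb P^2$ dual to $L_1,\dots,L_5$.

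The key geometric observation is that any five general points in $\mathbb P^2$ lie on a (unique, smooth) conic $Q$. The form $Q^{k-1}$ has degree $2(k-1)=2k-2$ and vanishes to order exactly $k-1$ at each of the five points, so $Q^{k-1}$ is a nonzero element of $[\wp_1^{k-1}\cap\cdots\cap\wp_5^{k-1}]_{2k-2}$. Therefore $\dim_k[R/J]_{2k-2}\geq 1$, and the socle degree of $R/J$ equals $2k-2$ as claimed.

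There is no real obstacle: the naive expected dimension of the linear system $\cL_2(2k-2;(k-1)^5)$ is negative for $k\geq 4$, so one must argue that the system is special, and the conic through the five points is precisely the source of this speciality. This is the only nontrivial input, and it makes the argument essentially immediate.
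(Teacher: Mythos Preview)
Your proof is correct and follows essentially the same route as the paper: both reduce the question to showing $\dim_k \cL_2(2k-2;(k-1)^5) \ge 1$ via Theorem \ref{thm:inverse-system}. The paper computes this dimension to be exactly $1$ ``arguing as in the previous sections'' (i.e.\ via Remark \ref{bezout}, which amounts to repeatedly splitting off the conic through the five points), whereas you simply exhibit the element $Q^{k-1}$ directly; the underlying geometry is identical, and your lower bound of $1$ is all that is needed for the socle-degree claim.
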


\begin{proof} As we pointed out at the beginning of section 3 the socle degree of $R/I$ is $2k-2$. To prove that $R/J$ has socle degree $2k-2$ it is enough to check that
$$\times L_5^k:[R/I]_{k-2}\longrightarrow [R/I]_{2k-2}$$
is not surjective or, equivalently, $[R/J]_{2k-2}\ne 0$. Arguing as in the previous sections, we have $$\dim [R/J]_{2k-2}= \dim [\wp_1^{k-1} \cap \dots \cap \wp_5^{k-1}]_{2k-2}=1$$
which proves what we want.
\end{proof}

\begin{rem}
It is natural to ask what happens for ideals generated by uniform powers of more than four general linear forms. Here is what happens for 6 general linear forms, at least experimentally using \cite{cocoa}. Here, $\times L^j$
fails to have maximal rank in all degrees for the following values of $k$ (taking $3 \leq j \leq 10$, $3 \leq k \leq 30$).

\[
\begin{array}{c|l}
j & k \\ \hline
3 & 5, 10, 15, 20, 25, 30 \\
4 & 7, 8, 12, 13, 17, 18, 22, 23, 27, 28 \\
5 & 9, 10, 11, 14, 15, 16, 19, 20, 21, 24, 25, 26, 29, 30 \\
6 & 9, 11, 12, 13, 14, 16, 17, 18, 19, 21, 22, 23, 24, 26, 27, 28, 29 \\
7 & 11, ... , 30 \\
8 & 10, 13, ... , 30 \\
9 & 12, 13, 15, ..., 30 \\
10 & 14, ..., 30

\end{array}
\]
It would be very interesting to extend the approach of Theorem \ref{L^2} to handle more than four general linear forms and prove an asymptotic result following the patterns visible here.
\end{rem}

%%%%%%%%%%%%%%%%%%%%%%%%%%%%%%%%%%%%%%%%%%%%%%%%%%%


\begin{thebibliography}{10}

\bibitem{anick} D.\ Anick, {\em Thin Algebras of embedding dimension three},
J.\ Algebra {\bf 100} (1986), 235--259.

\bibitem{cocoa}   CoCoATeam,
  CoCoA: a system for doing
     Computations in Commutative Algebra.
  Available at http://cocoa.dima.unige.it

\bibitem{CHMN} D. Cook II, B. Harbourne, J. Migliore and U. Nagel, {Line arrangements and configurations of points with an unusual geometric property}, in preparation.

\bibitem{powers} S. Cooper, G. Fatabbi, E. Guardo, A. Lorenzini, J. Migliore, U. Nagel, A. Seceleanu, J. Szpond and A. Van Tuyl, {\em Symbolic powers of codimension two Cohen-Macaulay ideals}, preprint 2016 (math. arXiv \#1606.00935v1).

\bibitem{DGO} E.~D. Davis, A.~V. Geramita, and F.~Orecchia.  \newblock
  Gorenstein algebras and the {C}ayley-{B}acharach theorem.  \newblock
  {\em Proc. Amer. Math. Soc.}, 93(4):593--597, 1985.

\bibitem{DL}  C.\ De Volder and A.\ Laface, {\em  On linear systems
of $\PP^3$ through multiple points}, J. Algebra {\bf 310} (2007),
207--217.

\bibitem{DIV}
R.\ Di Gennaro, G.\ Ilardi and J.\ Vall\`es.
{\it Singular hypersurfaces characterizing the Lefschetz properties},
J.\ London Math.\ Soc., (2) 89 (2014), no.\ 1, 194--212 (arXiv:1210.2292).

\bibitem{Dumnicky} M.\ Dumnicki, {\em An algorithm to bound the regularity
and nonemptiness of linear systems in $\PP^n$},   J.\ Symbolic
Comput.\ {\bf 44}  (2009),  1448--1462.

\bibitem{EI} J.\ Emsalem and A.\ Iarrobino, {\em Inverse system of a symbolic power $I$}, J.\ Algebra {\bf 174} (1995), 1080-1090.

\bibitem{hss} B.\ Harbourne, H.\ Schenck and A.\ Seceleanu, {\em Inverse systems, Gelfand-Tsetlin patterns and the Weak Lefschetz Property}, preprint 2010.

\bibitem{HMNW} T.\ Harima J.\ Migliore, U.\ Nagel and
          J.\ Watanabe:  {\em The Weak and Strong Lefschetz properties for Artinian $K$-Algebras},
          J.\ Algebra {\bf 262} (2003), 99-126.

\bibitem{HU} C. Huneke and B. Ulrich, {\em Powers of licci ideals}, in ``Commutative Algebra, Berkeley, CA, 1987, MSRI Publ., Vo. 15, pp. 339--346, Springer-Verlag, New York, 1989.

\bibitem{LU} A. \ Laface and U.\ Ugaglia, {\em On a class of special
linear systems of $\PP^3$}, Trans. \ Amer. \ Math. \ Soc. {\bf 358}
(2006),
 5485--5500.

 \bibitem{MM} J. Migliore and R. Mir\'o-Roig, {\em On the minimal free resolution of $n+1$ general forms}, Trans. Amer. Math. Soc.{\bf  355} (2002), 1--36.

 \bibitem{MMN-monomials} J. Migliore, R. Mir\'o-Roig and U. Nagel, {\em Monomial ideals, almost complete intersections and the weak Lefschetz property}, Trans. Amer. Math. Soc. {\bf 363}, No. 1 (2011), 229--257.

\bibitem{MMN-lin forms} J. Migliore, R. Mir\'o-Roig and U. Nagel, {\em On the weak Lefschetz property for powers of linear forms}, Algebra and Number Theory {\bf 6}:3 (2012), 487--526.

\bibitem{Nagata} M.\ Nagata, {\em On the fourteenth problem of
Hilbert}, In:
 Proc.\ Internat.\ Congress Math.\ 1958,  pp. 459--462, Cambridge
Univ.\ Press, New York,  1960.

\bibitem{SS} H. Schenck and A. Seceleanu, {\em The weak Lefschetz property and powers of linear forms in $k[x,y,z]$}, Proc. Amer. Math. Soc. {\bf 138}:7 (2010), 2335--2339.

\bibitem{stanley} R. Stanley, {\em Weyl groups, the hard Lefschetz theorem, and the Sperner property}, SIAM J.
Algebraic Discrete Methods {\bf 1} (1980), 168--184.

\bibitem{watanabe} J. Watanabe, {\em The Dilworth number of Artinian rings and finite posets with rank function}, Commutative Algebra and Combinatorics, Advanced Studies in Pure Math., Vol. 11, North
Holland, Amsterdam (1987), 303--312.

\end{thebibliography}
\end{document}